\documentclass[a4paper,12pt]{amsart}

\usepackage{environ}
\usepackage{fancyhdr}
\usepackage{mabliautoref}
\usepackage{amssymb,amsthm,amsmath}
\RequirePackage[dvipsnames,usenames]{xcolor}
\usepackage{hyperref}
\usepackage{cleveref}
\usepackage{mathtools}
\usepackage{tcolorbox}
\usepackage[all]{xy}
\usepackage{tikz}
\usepackage{tikz-cd}

\makeatletter
\def\fixtikzforbreqn#1#2{%
	\protected\edef#1{\noexpand\ifmmode\mathchar\the\mathcode`#2 \noexpand\else#2\noexpand\fi}%
}
\fixtikzforbreqn\tikz@nonactivesemicolon;
\fixtikzforbreqn\tikz@nonactivecolon:
\fixtikzforbreqn\tikz@nonactivebar|
\fixtikzforbreqn\tikz@nonactiveexlmark!
\makeatother

\usetikzlibrary{decorations.markings,shapes,positioning}
\usepackage{enumitem}
\usepackage{stmaryrd}
\usepackage{xfrac}
\usepackage{fix-cm}
\usepackage{faktor}
\usepackage{minibox}
\usepackage{commath}
\usepackage{chngcntr}
\usepackage{setspace}
\usepackage{array}
\usepackage[toc,page]{appendix}
\usepackage{calligra,mathrsfs}
\usepackage{mathtools}
\usepackage{bbm}

\hypersetup{
	bookmarks,
	bookmarksdepth=3,
	bookmarksopen,
	bookmarksnumbered,
	pdfstartview=FitH,
	colorlinks,backref,hyperindex,
	linkcolor=Sepia,
	anchorcolor=BurntOrange,
	citecolor=Cyan,
	filecolor=BlueViolet,
	menucolor=Yellow,
	urlcolor=OliveGreen
}

\newtcolorbox{activitybox}[1][]{%
	breakable,
	enhanced,
	colback=lightergray,
	boxrule=3pt,
	arc=5pt,
	outer arc=5pt,
	boxsep=10pt,
	colframe=darkergray,
	coltitle=white,
	#1
}

%

\newcommand{\quot}[2]{%
	\raise1ex\hbox{$#1$}\Big/\lower1ex\hbox{$#2$}%
}

\newcommand{\colim}{\varinjlim}
\renewcommand{\lim}{\varprojlim}

\makeatletter\def\Rvarlim@#1#2{%
	\vtop{\m@th\ialign{##\cr
			\hfil$#1\operator@font Rlim$\hfil\cr
			\noalign{\nointerlineskip\kern1.5\ex@}#2\cr
			\noalign{\nointerlineskip\kern-\ex@}\cr}}%
}
\makeatother

\makeatletter\def\Rlim{%
	\mathop{\mathpalette\Rvarlim@{\leftarrowfill@\textstyle}}\nmlimits@
}
\makeatother

\newcommand{\expl}[2]{\underset{\mathclap{\minibox[c]{$\uparrow$\\ \fbox{\footnotesize #2}}}}{#1}}


\newcommand{\ra}{\rightarrow}

\def\xto#1#2{\xrightarrow{#1}{#2}}
\def\xto#1{\xrightarrow{#1}}
\newcommand{\surj}{\twoheadrightarrow}
\newcommand{\inj}{\hookrightarrow}
\newcommand{\ttilde}{\widetilde}

\newcommand{\HHom}{\mathcal{H}om}

\newcommand{\et}{\acute{e}t}


\newcommand{\stacksproj}[1]{{\cite[Tag~\href{http://stacks.math.columbia.edu/tag/#1}{#1}]{Stacks_Project}}}

\newcommand{\inc}{\subseteq}

\newcommand{\cni}{\supseteq}

\newcommand{\esp}{\mbox{ }}
\newcommand{\bighat}{\widehat}

\DeclareMathAlphabet{\mathchanc}{OT1}{pzc}%
{m}{it}



\renewcommand{\set}[2]{\{ \ #1 \  | \ #2 \ \}}

\newcommand{\bF}{\mathbb{F}}

\newcommand{\bQ}{\mathbb{Q}}

\newcommand{\bZ}{\mathbb{Z}}

\newcommand{\scr}{\mathcal}
\newcommand{\cA}{\scr{A}}
\newcommand{\cB}{\scr{B}}
\newcommand{\cC}{\scr{C}}

\newcommand{\cF}{\scr{F}}
\newcommand{\cG}{\scr{G}}
\newcommand{\cH}{\scr{H}}

\newcommand{\cM}{\scr{M}}
\newcommand{\cN}{\scr{N}}
\newcommand{\cO}{\scr{O}}
\newcommand{\cP}{\scr{P}}

\newcommand{\cR}{\scr{R}}

\newcommand{\cT}{\scr{T}}

\newcommand{\cV}{\scr{V}}

\DeclareMathOperator{\injj}{{inj}}

\DeclareMathOperator{\nilp}{{nilp}}

\DeclareMathOperator{\FM}{{FM}}

\DeclareMathOperator{\alb}{{alb}}
\DeclareMathOperator{\Alb}{{Alb}}

\DeclareMathOperator{\codim}{codim}
\DeclareMathOperator{\coker}{{coker}}

\DeclareMathOperator{\depth}{{depth}}

\DeclareMathOperator{\Ext}{Ext}

\DeclareMathOperator{\Hom}{Hom}

\DeclareMathOperator{\Tor}{Tor}

\DeclareMathOperator{\im}{{im}}

\DeclareMathOperator{\length}{{length}}

\DeclareMathOperator{\Pic}{Pic}

\DeclareMathOperator{\projdim}{{projdim}}

\DeclareMathOperator{\rank}{{rank}}

\DeclareMathOperator{\red}{red}

\DeclareMathOperator{\Spec}{{Spec}}

\DeclareMathOperator{\Supp}{{Supp}}


\DeclareMathOperator{\Crys}{Crys}
\DeclareMathOperator{\coh}{coh}

\DeclareMathOperator{\RGamma}{R\Gamma}

\DeclareMathOperator{\Coh}{Coh}

\DeclareMathOperator{\ShHom}{\mathscr{H}\text{\kern -3pt {\calligra\large om}}\,}

\DeclareFontFamily{OT1}{pzc}{}
\DeclareFontShape{OT1}{pzc}{m}{it}{<-> s * [1.200] pzcmi7t}{}
\DeclareMathAlphabet{\mathpzc}{OT1}{pzc}{m}{it}

\AtBeginDocument{%
	\mathchardef\phialt=\phi
	\mathchardef\phi=\varphi
}

\newcommand{\factor}[2]{\left. \raise 2pt\hbox{\ensuremath{#1}} \right/
	\hskip -2pt\raise -2pt\hbox{\ensuremath{#2}}}


\makeatletter
\renewcommand\subsection{
	\renewcommand{\sfdefault}{pag}
	\@startsection{subsection}%
	{2}{0pt}{.8\baselineskip}{.4\baselineskip}{\raggedright
		\sffamily\itshape\small\bfseries
}}
\renewcommand\section{
	\renewcommand{\sfdefault}{phv}
	\@startsection{section} %
	{1}{0pt}{\baselineskip}{.8\baselineskip}{\centering
		\sffamily
		\scshape
		\bfseries
}}
\makeatother

\definecolor{gr}{rgb}{0,0.5,0}

\newcommand{\fm}{\mathfrak{m}}

\setcounter{tocdepth}{2}

\newcommand{\Addresses}{{
		\bigskip
		\footnotesize
		
		\textsc{\'Ecole Polytechnique F\'ed\'erale de Lausanne, SB MATH CAG, MA C3 615 (B\^atiment MA), Station 8, CH-1015 Lausanne, Switzerland}\par\nopagebreak
		\textit{E-mail address}: \texttt{jefferson.baudin@epfl.ch}

}}

\author{Jefferson Baudin}
\date{}

\usepackage[margin=.95in]{geometry}
\setlength\parindent{12pt}
\setlist{  
	listparindent=\parindent,
	parsep=0pt,
}

\subjclass[2020]{14K05, 14G17, 14F17}
\keywords{Generic vanishing, positive characteristic geometry, Cartier crystals, abelian varieties}

\title{Generic vanishing theory in positive characteristic}
\begin{document}
	\maketitle
\begin{abstract}
	We simplify and improve the main fundamental theorems of positive characteristic generic vanishing theory. As a quick corollary of the theory, we prove that a normal proper variety $X$ of maximal Albanese dimension satisfies $H^0(X, \omega_X) \neq 0$. If $\Alb(X)$ is in addition ordinary, then $S^0(X, \omega_X) \neq 0$.
\end{abstract}	
	\tableofcontents
\section{Introduction}

\subsection{Background from characteristic zero}

Generic vanishing techniques are very powerful tools in the study of irregular varieties in characteristic zero. For example, they allow for birational characterizations of abelian varieties \cite{Chen_Hacon_Characterization_of_abelian_varieties, Hacon_Pardini_On_the_birational_geometry_of_vars_of_mad, Jiang_An_effective_version_of_a_thm_of_Kawamata_on_the_Albanese_map, Pareschi_Basic_results_on_irr_vars_via_FM_methods}, a deep effective understanding of pluricanonical systems \cite{Jiang_Lahoz_Tirabassi_On_the_Iitaka_fibration_of_varieties_of_maximal_albanese_dimension, Barja_Lahoz_Naranjo_Pareschi_On_the_bicanonical_map_of_irregular_varieties, Chen_Jiang_Positivity_in_varieties_of_maximal_Albanese_dimension}, a proof to Iitaka's conjecture in certain cases \cite{Cao_Paun_Kodaira_dimension_of_algebraic_fiber_spaces_over_abelian_varieties, Hacon_Popa_Schnell_Alg_fiber_spaces_over_abelian_varieties, Meng_Popa_Kodaira_dimension_of_fibrations_over_abelian_varieties} and so on \cite{Ein_Lazarsfeld_Singularities_of_theta_divisors_and_the_birational_geometry_of_irregular_varieties, Hacon_Pardini_Birational_Characterization_Of_Products_Of_Curves_Of_Genus_2, Chen_Debarre_Jiang_Varieties_with_vanishing_euler_char, Pareschi_Popa_Regularity_on_AV_I, Pareschi_Popa_Regularity_on_AV_III, Jiang_Pareschi_Cohomological_rank_functions_on_abelian_varieties}. 

Let us explain what the main generic vanishing theorem is in characteristic zero, from which the theory has evolved. Fix an abelian variety $A$ over an algebraically closed field, and let $\bighat{A} = \Pic^0(A)$ be its dual. Given a coherent sheaf $\cM$ on $A$ , define the \emph{cohomological support loci} \[ V^i(\cM) \coloneqq \set{\alpha \in \bighat{A}}{H^i(A, \cM \otimes \alpha) \neq 0} \inc \bighat{A}. \] Using the upper-semicontinuity of the dimension of cohomology groups in families (see e.g. \cite[Theorem 12.8]{Hartshorne_Algebraic_Geometry}), one can show that these subsets are closed. The point of the generic vanishing theorem is to say that for some specific sheaves on $A$ of geometric interest, these subsets $V^i$ are rather small as $i$ increases. With this in mind, Pareschi and Popa defined in \cite{Pareschi_Popa_GV_sheaves_FM_transform_and_generic_vanishing} the following notion:

\begin{definition*}
	We say that a coherent sheaf $\cM$ on $A$ is a \emph{GV--sheaf} if for all $i \geq 0$, \[ \codim V^i(\cM) \geq i. \]
\end{definition*}
The letters ``GV'' stand for \emph{generic vanishing}: if $\cM$ is a GV--sheaf and $\alpha \in \bighat{A}$ is a general line bundle, then in particular $H^i(A, \cM \otimes \alpha) = 0$ for all $i > 0$ (i.e. ``cohomology groups vanish generically''). One can therefore see the property of being a GV--sheaf as a positivity property (for example, a line bundle on an abelian variety is a GV--sheaf if and only if it is numerically equivalent to an effective divisor, see \autoref{example_when_line_bundles_are_GV}). \\

A general philoshophy is that given a morphism $f \colon X \to Y$ of smooth complex varieties, then the sheaves $R^jf_*\omega_{X/Y}$ and $f_*\omega_{X/Y}^{\otimes m}$ satisfy certain positivity properties for $i \geq 0$ and $m \geq 1$ (see e.g. \cite{Griffiths_Periods_of_integral_on_algebraic_manifolds_III, Viehweg_Weak_positivity_and_the_additivity_of_Kodaira_dimension_I, Kollar_Higher_Direct_Image_of_Dualizing_Sheaves_I, Hacon_Popa_Schnell_Alg_fiber_spaces_over_abelian_varieties}). When $Y$ is an abelian variety, we have that $\omega_{X/Y} \cong \omega_X$, since $\omega_Y \cong \cO_Y$, so the following fundamental theorems fit neatly in this philosophy:

\begin{theorem*}[\cite{Green_Lazarsfeld_Generic_vanishing, Green_Lazarsfeld_GV_2, Simpson_Subspaces_of_moduli_spaces_of_rank_one_local_systems, Hacon_A_derived_category_approach_to_generic_vanishing, Hacon_Popa_Schnell_Alg_fiber_spaces_over_abelian_varieties}]
	Let $X$ be a smooth projective complex variety, and let $a \colon X \to A$ be a morphism to an abelian variety. Then each $R^ja_*\omega_X$ is a GV-sheaf. Furthermore, the subsets $V^i(R^ja_*\omega_X)$ are finite union of torsion translates of abelian subvarieties of $\bighat{A}$. The same statements also hold for the sheaves $a_*\omega_X^{\otimes m}$ with $m \geq 1$ instead.
\end{theorem*}

In \cite{Mukai_Fourier_Mukai_transform}, Mukai defined an equivalence of categories between the derived categories of coherent sheaves $D^b(A)$ and $D^b(\bighat{A})$, called the \emph{Fourier-Mukai transform}. We can modify it to obtain the relevant equivalence to us: the \emph{symmetric Fourier-Mukai transform} (see \cite{Schnell_Fourier_Mukai_transform_made_easy}). It is denoted 
\[ \begin{tikzcd}
	D^b_{\coh}(A)^{op} \arrow[r, "\FM_A"] & D^b_{\coh}(\bighat{A}).
\end{tikzcd} \] Even if we start with a sheaf $\cM$ on $A$, its image $\FM_A(\cM)$ might be a complex in the derived category, not just a sheaf! A breakthrough of Hacon and Pareschi--Popa was to realize the following:

\begin{theorem*}\label{GV_char_zero_intro}\cite{Hacon_A_derived_category_approach_to_generic_vanishing, Pareschi_Popa_GV_sheaves_FM_transform_and_generic_vanishing}
	A sheaf $\cM$ on an abelian variety $A$ is a GV-sheaf if and only if $\FM_A(\cM)$ is concentrated in degre zero, i.e.  we have \[ \cH^i\FM_A(\cF) = 0 \] for all $i \neq 0$.
\end{theorem*}

Playing explicitly with Fourier--Mukai transforms and understand explicitly commutative algebraic properties (e.g. depth, rank, length, support) of $\FM_A(\cF)$ for certain GV--sheaves $\cF$ such as $a_*\omega_X$ is central in many of the proofs that use generic vanishing.
	
\subsection{The positive characteristic story}

Over fields of prime characteristic, the generic vanishing theorem above is known to fail (\cite{Filipazzi_GV_fails_in_pos_char}). The key idea of Hacon and Patakfalvi in \cite{Hacon_Pat_GV_Characterization_Ordinary_AV} is to ask for a weakening of this vanishing, namely we want it to hold \emph{up to nilpotence}.

Let us explain what we mean: Cartier defined in \cite{Cartier_une_nouvelle_operation_sur_les_formes_differentielles} an important operator $C$ on the sheaf of top forms $\omega_X$, for $X$ a smooth projective variety over a perfect field $k$ of positive characteristic (see \autoref{ex:Cartier_operator}). This operation is additive and $p^{-1}$--linear, meaning that for all $\lambda \in \cO_X$ and $\eta \in \omega_X$, \[ C(\lambda^p\eta) = \lambda C(\eta). \]

In other words, we have a morphism $C \colon F_*\omega_X \to \omega_X$, where $F$ denotes the Frobenius on $X$. In general, the data of a coherent sheaf $\cM$ with an $\cO_X$--linear homomorphism $F_*\cM \to \cM$ is called a \emph{Cartier module}.

Let us give an example why working up to nilpotence allows for pleasant vanishing properties. Although Kodaira vanishing fails in positive characteristic (\cite{Raynaud_Failure_Kodaira_vanishing}), it \emph{does} hold up to nilpotence under this Cartier operator in the following sense: for any ample line bundle $L$, there exists $e > 0$ such that for all $i > 0$, 
\begin{equation}\label{KV_up_to_nilpotence}
	H^i(X, F^e_*\omega_X \otimes L) \to H^i(X, \omega_X \otimes L)
\end{equation} is zero. This is immediate, since for $e \gg 0$, \[ H^i(X, F^e_*\omega_X \otimes L) \cong H^i(X, F^e_*(\omega_X \otimes F^{e, *}L) = H^i(X, F^e_*(\omega_X \otimes L^{p^e})) = 0  \] by Serre vanishing.

Although this is a very basic observation, it appears a lot in positive characteristic birational geometry \cite{Schwede_A_canonical_linear_system_associated_to_adjoint_divisors_in_char_p}. This is for example used in the study of positive characteristic fibrations (\cite{Patakfalvi_On_Subadditivity_of_Kodaira_dimension_in_positive_characteristic, Ejiri_Patakfalvi_The_Demailly_Peternell_Schneider_conjecture_is_true_in_pos_char, Ejiri_Numerical_Kodaira_dimension_of_algebraic_fiber_spaces_in_positive_characteristic}), or in the development of the three--dimensional minimal model program (\cite{Hacon_Xu_On_the_3dim_MMP_in_pos_char}).

One might wonder if other kinds of vanishing theorems up to nilpotence could hold. The author's experience suggests that an accurate philosophy is this one:

\begin{itemize}
	\item if some vanishing follows from Kodaira vanishing in characteristic zero, then it should hold when working up to nilpotence;
	\item if some vanishing follows from Kawamata--Viehweg vanishing (here we mean Kodaira vanishing for big and nef line bundles) in characteristic zero, then it should not hold when working up to nilpotence in general.
\end{itemize}

Here are some instances of the above philosophy: 

\begin{enumerate}
	\item (\cite[Lemma 2.1]{Hacon_Witaszek_On_the_relative_MMP_for_4folds_in_pos_and_mixed_char}, \cite[Proposition 2.10]{Baudin_Patakfalvi_Rosler_Zdanowicz_On_Gorenstein_Q_p_rational_threefolds_and_fourfolds}) log canonical centers of dlt pairs are $S_2$ up to universal homeomorphism;
	\item (\cite{Baudin_Bernasconi_Kawakami_Frobenius_GR_fails}) for $n \leq 4$, klt $n$--fold singularities are $\bF_p$--rational, meaning that if $\pi \colon Y \to X$ is a resolution of such a singularity, then for all $i > 0$, the action of the Frobenius on $R^i\pi_*\cO_Y$ is nilpotent;
	\item(\cite{Baudin_Bernasconi_Kawakami_Frobenius_GR_fails,Totaro_The_failure_of_KV_and_terminal_singularities_that_are_not_CM, Totaro_Terminal_3folds_that_are_not_CM}) already klt 3--fold singularities might not be Cohen--Macaulay up to nilpotence (the Frobenius action on the second local cohomology group might not be nilpotent) in low characteristics;
	\item (\cite{Baudin_Bernasconi_Kawakami_Frobenius_GR_fails}) the vanishing \autoref{KV_up_to_nilpotence} can fail if we replace $L$ ample by $L$ semiample and big in any characteristic, even for smooth projetive threefolds (nevertheless, see \cite[Theorem 1.6]{Bhatt_Derived_Splinters_In_Positive_Characteristic}).
\end{enumerate}

An other instance of this philosophy is the characteristic zero generic vanishing theorem, which ultimately relies a Kodaira vanishing--type theorem of Koll\'ar (\cite{Kollar_Higher_Direct_Image_of_Dualizing_Sheaves_I, Kollar_Higher_Direct_Image_of_Dualizing_Sheaves_II}). Here is the positive characteristic replacement, that roughly states that \[ \mbox{``Cartier modules are GV--sheaves up to nilpotence.''} \]

\begin{theorem*}[{\cite[Corollary 3.1.4]{Hacon_Pat_GV_Characterization_Ordinary_AV}}]
	Let $\cM$ be a Cartier module on an abelian variety $A$ (e.g. $R^ja_*\omega_X$, where $a \colon X \to A$ is a morphism from a smooth projective variety). Then for all $i \neq 0$, the induced action on each $\cH^i\FM_A(\cM)$ is nilpotent.
\end{theorem*}

Let $\cM$ be a Cartier module on $A$. Since the above theorem suggests that we should work up to nilpotence, one should define version of $V^i(\cM)$ up to nilpotence. The definition is as follows \[ W^i_F(\cM) \coloneqq \set{\alpha \in \bighat{A}}{\lim_e H^i(A, F^e_*\cM \otimes \cP_{\alpha}) \neq 0}. \] Since $(p^{-1})W^i_F(\cM) = W^i_F(\cM)$ by the projection formula, these subsets are not closed in general (e.g. if it is non--empty, then it is in fact dense when in case where $A$ is ordinary). The theorem below shows that one can approximate $W^i_F(\cM)$ by canonically defined closed subsets with the expected properties.

\begin{thm_letter}[{\autoref{generic vanishing}, \cite[Theorem 1.1]{Hacon_Pat_GV_Geom_Theta_Divs}}]\label{intro_SCSL}
	Let $\cM$ be a Cartier module on $A$, and set $g \coloneqq \dim(A)$. Then for all $0 \leq i \leq g$, there exist closed subsets $W^i$ and $Z^i$ of $\bighat{A}$ with the following properties:
	\begin{enumerate}[topsep=1ex, itemsep=1ex]
		\item $W^{g} \inc W^{g - 1} \inc \dots \inc W^1 \inc  W^0$;
		\item $\codim W^i \geq i$;
		\item $p(W^i) = W^i$. In particular, if $A$ has no supersingular factor, then $W^i$ is a finite union of torsion translates of abelian subvarieties of $\bighat{A}$;
		\item we have \[ W^0 \inc \set{\alpha \in \bighat{A}}{H^0(A, \cM \otimes \cP_{\alpha}) \neq 0}; \]
		\item $\codim(Z^i) \geq i + 1$;
		\item there are inclusions  \[\left(\bigcup_{e \geq 0}(p^{es})^{-1}W^i\right) \setminus \ttilde{Z^i} \: \inc \: W^i_F(\cM) \: \inc \: \bigcup_{e \geq 0}(p^{es})^{-1}W^i,\] where \[ W^i_F(\cM) \coloneqq \set{\alpha \in \bighat{A}}{\lim_e H^i(A, F^e_*\cM \otimes \cP_{\alpha}) \neq 0} \] and \[ \ttilde{Z^i} \coloneqq \set{\alpha \in \bighat{A}}{p^{es}(\alpha) \in Z^i \mbox{ for infinitely many }e}. \]
		In particular, if $W^i$ has a component of exact codimension $i$, then very general points of $W^i$ lie in $W^i_F(\cM)$.
	\end{enumerate}
\end{thm_letter}

This theorem is a slight improvement of \cite[Theorem 1.1]{Hacon_Pat_GV_Geom_Theta_Divs}. Nevertheless, this small addition is essential in the proof of \cite[Theorem 6.3.2]{Baudin_Duality_between_perverse_sheaves_and_Cartier_crystals}. \\

In \autoref{sec:examples}, we give some examples illustrating the theory, and showing the kind of pathologies that can occur.

\subsection{Goal of the paper and new results}

\setcounter{theorem}{1}

An issue with the literature in positive characteristic generic vanishing is that it is scattered in several different papers, and might be hard to follow sometimes. Our goal here is to write everything in one single reference and simplify the proofs, with the hope of making the subject more accessible to a broader audience.

Therefore, we first decided to reprove some of the main statements of positive characteristic generic vanishing theory, namely \cite[Corollary 3.1.4]{Hacon_Pat_GV_Characterization_Ordinary_AV} and \cite[Theorems 5.2 and 1.1]{Hacon_Pat_GV_Geom_Theta_Divs}. We already mentioned \cite[Theorems 1.1]{Hacon_Pat_GV_Geom_Theta_Divs}, so let us focus on the other two main statements. Although our proof of \cite[Corollary 3.1.4]{Hacon_Pat_GV_Characterization_Ordinary_AV} is virtually the same, and is only here for the sake of completeness, we believe that our proof of \cite[Theorem 5.2]{Hacon_Pat_GV_Geom_Theta_Divs} is significantly easier than the original one. As it turns out, we pushed the study to obtain the following new result, which gives a lot of flexibility and which will be used in \cite{Baudin_Effective_Characterization_of_ordinary_abelian_varieties}:

\begin{thm_letter}[{\autoref{equivalence_V_crystals_and_Cartier_crystals}}]\label{intro_eq_cat}
	Let $A$ be an abelian variety. Then the functor $\FM_A$ induces an equivalence of categories between Cartier crystals and $V$--crystals.
\end{thm_letter}

Let us explain the setup. We already explained that the relevant objects satisfying generic vanishing are Cartier modules up to nilpotence. One can make precise sense of ``modding out the category of Cartier modules by nilpotent ones'', and this gives rise to so--called \emph{Cartier crystals}.

As it turns out, given a Cartier module $\cM$, we obtain for each $i \in \bZ$ a natural morphism \[ \cH^i(\FM_A(\cM)) \to V^*\cH^i(\FM_A(\cM)), \] where $V$ denotes the Verschiebung (the isogeny dual to the Frobenius). Such an object is called a \emph{$V$--module}, and was first defined in \cite{Hacon_Pat_GV_Geom_Theta_Divs}. Since we work with Cartier modules up to nilpotence (i.e. Cartier crystals), we should also treat $V$--modules up to nilpotence, that is \emph{$V$--crystals}. With the idea in mind that Cartier crystals are GV--sheaves, the equivalence of categories is then given by \[ \cM \mapsto \cH^0\FM_A(\cM). \]
We refer the reader to \autoref{sec:equiv_cat} for further details. \\ 

Let us finish this introduction with a new concrete application of generic vanishing. In characteristic zero, one can show that if $X$ is normal proper variety of maximal Albanese dimension, then $H^0(X, \omega_X) \neq 0$ (see \autoref{last_rem}). In positive characteristic however, one cannot run such an argument due to the presence of purely inseparable phenomena. Nevertheless, we can use generic vanishing theory to obtain the following:

\begin{thm_letter}[{\autoref{effectivity_canonical}}]\label{intro_effectivity_canonical}
	Let $X$ be a normal proper variety of maximal Albanese dimension. Then $H^0(X, \omega_X) \neq 0$. If $X$ further admits a generically finite morphism to an ordinary abelian variety, then $S^0(X, \omega_X) \neq 0$.
\end{thm_letter}

The vector space $S^0(X, \omega_X) \inc H^0(X, \omega_X)$ is defined as \[  S^0(X, \omega_X) \coloneqq \bigcap_{e > 0}\im(H^0(X, F^e_*\omega_X) \to H^0(X, \omega_X)), \] where $F^e_*\omega_X \to \omega_X$ denotes the Cartier operator iterated $e$ times. An equivalent way to state that $S^0(X, \omega_X) \neq 0$ is that there exists $0 \neq \eta \in H^0(X, \omega_X)$ such that $C(\eta) = \eta$, where $C$ denotes the Cartier operator.

This theorem and its proof will be an important first step to obtain an effective birational characterization of abelian varieties, see \cite{Baudin_Effective_Characterization_of_ordinary_abelian_varieties}.

\subsection{Acknowledgments}
I would like to thank Fabio Bernasconi, Lucas Gerth, Léo Navarro Chafloque and Zsolt Patakfalvi for insightful discussions about the content of this article. I also thank the anonymous referee for their careful reading of the article and their insightful comments.

Financial support was provided by grant $\#$200020B/192035 from the Swiss National Science Foundation (FNS/SNF), and by grant $\#$804334 from the European Research Council (ERC).

\subsection{Notations}

\begin{itemize}
	\item We fix once and for all a prime number $p > 0$. All rings and schemes in this paper are defined over $\bF_p$.
	\item We fix an algebraically closed field $k$ (of characteristic $p$).
	\item A \emph{point} denotes a closed point. We will refer to non--closed point as \emph{scheme--theoretic points}.
	\item Throughout, $A$ denotes an abelian variety over $k$ of dimension $g$, with dual abelian variety $\bighat{A}$ and normalized Poincaré bundle $\cP \in \Pic(A \times \bighat{A})$. For a point $\alpha \in \bighat{A}$, we write $\cP_\alpha \coloneqq \cP|_{A \times \alpha} \in \Pic^0(A)$. Given an integer $n \in \bZ$, multiplication by $n$ on the group law of $A$ will still be denoted $n$.
	\item Given a subset $S \inc A$, we denote by $-S$ the image of $S$ under the inversion involution on $A$.
	\item A variety is a scheme $X$ which is separated and of finite type over $k$. 
	\item The symbol $F$ always denotes the absolute Frobenius of a $\bF_p$--scheme, i.e. the unique endomorphism with restrict to taking $p$--powers on affine charts.
	\item An $\bF_p$--scheme $X$ is $F$--finite if its absolute Frobenius morphism $F \colon X \to X$ is finite. 
	
	\item If $\pi \colon X \to S$ is a morphism, then $F_{X/S} \colon X \to X^{(p)}$ always denotes the relative Frobenius over $S$, i.e. the unique map fitting in the following diagram: 
	\[ \begin{tikzcd}
		X \arrow[rrrrd, "F", bend left] \arrow[rrdd, "\pi"', bend right] \arrow[rrd, "F_{X/S}", dashed] &      &                        &  &             \\
		& & X^{(p)} \arrow[rr, "F'"] \arrow[d, "\pi'"'] &  & X \arrow[d, "\pi"] \\
		& & S \arrow[rr, "F"']            &  & S          
	\end{tikzcd} \] where the middle square is by definition a pullback square. In particular, if $F \colon S \to S$ is an isomorphism (e.g. $S$ is a perfect field), then $X^{(p)}$ and $X$ can be identified as abstract schemes.
	\item Given a complex $\cA^{\bullet}$ with values in some abelian category and $i \in \bZ$, we denote by $\cH^i(\cA^{\bullet})$ its $i$--th cohomology object.
	\item Given a variety $X$, we let $D_{\coh}(X)$ denote the derived category of complex of $\cO_X$--modules $\cA^{\bullet}$ such that $\cH^i(\cA^{\bullet})$ is coherent for all $i \in \bZ$. We let $D^b_{\coh}(X) \inc D_{\coh}(X)$ denote the full subcategory of complexes $\cA^{\bullet}$ such that $\cH^i(\cA^{\bullet}) = 0$ for $\abs{i} \gg 0$.
\end{itemize}

\section{Preliminaries}

Throughout, fix a positive integer $s > 0$.

\subsection{Cartier modules and crystals}

As stated in the introduction, the main actors in positive characteristic generic vanishing theory are \emph{Cartier modules} and \emph{crystals}. Here we gather the basic definitions of these objects. We will not need advanced results about these objects in the paper. The reader interested in further details about these objects is invited to read \cite{Blickle_Bockle_Cartier_modules_finiteness_results, Baudin_Duality_between_perverse_sheaves_and_Cartier_crystals}.

\begin{defn}
	Let $X$ be a Noetherian $\bF_p$--scheme.
	\begin{itemize}
		\item An \emph{$s$--Cartier module} is a pair ($\cM$, $\theta$) where $\cM$ is a coherent $\cO_X$--module and $\theta \colon F^s_*\cM \to \cM$ is a morphism. We call $\theta$ the \emph{structural morphism} of $\cM$. 
		
		\item A morphism of $s$--Cartier modules $h \colon (\cM_1, \theta_1) \to (\cM_2, \theta_2)$ is a morphism of underlying $\cO_X$--modules $h \colon \cM_1 \to \cM_2$ making the square
		
		\[ \begin{tikzcd}
			F^s_*\cM_1 \arrow[rr, "F^s_*h"] \arrow[d, "\theta_1"'] && F^s_*\cM_2 \arrow[d, "\theta_2"] \\
			\cM_1 \arrow[rr, "h"']                                     && \cM_2                               
		\end{tikzcd} \]
		commute. The category of $s$--Cartier modules is denoted $\Coh_X^{C^s}$.
	\end{itemize}	
\end{defn}

\begin{rem}
	Since it will not cause any confusion in this paper, we shall simply write ``Cartier module'' and omit the $s$ from the wording.
\end{rem}

Note that one can take the pushforward of a Cartier module: if $f \colon X \to Y$ is a proper morphism of Noetherian $\bF_p$--schemes and $(\cM, \theta)$ is a Cartier module on $X$, then $(f_*\cM, f_*\theta)$ defines a Cartier module on $Y$ (the properness assumption was only here to ensure that $f_*\cM$ is coherent).

\begin{example}\label{ex:Cartier_operator}
	Let us give the arguably most important example of a Cartier module (with $s = 1$): the sheaf of top differential forms together with the \emph{Cartier operator}. Assume that $X$ is a smooth variety, and let $\Omega_X^{\bullet}$ denote its de Rham complex. Note that although the differential $d$ is not $\cO_X$--linear, the maps $F_*d \colon F_*\Omega_X^j \to F_*\Omega_X^{j + 1}$ are $\cO_X$--linear (i.e. $d(f^ps) = f^pds$ for all $f \in \cO_X$ and $s \in \Omega_X^j$). \\
	
	The following important operator was defined by Cartier in \cite{Cartier_une_nouvelle_operation_sur_les_formes_differentielles}:
	\begin{equation}\label{Cartier_operator}
		\begin{tikzcd}[row sep=small]
			\Omega^j_X \arrow[rr, "C^{-1}"]            && \cH^j(F_*\Omega^{\bullet}_X)                               \\
			df_1 \wedge \dots \wedge df_j \arrow[rr, mapsto] && {[f_1^{p - 1}\dots f_j^{p -1}df_1 \wedge \dots \wedge df_j]},
		\end{tikzcd}
	\end{equation}
	for $f_1, \dots, f_j \in \cO_X$. In \emph{loc.cit}, the author shows that \autoref{Cartier_operator} is an isomorphism. Hence, we define a Cartier module structure on $\omega_X \coloneqq \Omega_X^{\dim X}$ by the composition 
	
	\[ \begin{tikzcd}
		F_*\omega_X \arrow[rr, two heads] &  & \cH^{\dim X}(F_*\Omega_X^{\bullet}) \arrow[rr, "(C^{-1})^{-1}"] &  & \omega_X.
	\end{tikzcd} \]

	More generally, if $X$ is a normal variety, we define $\omega_X \coloneqq j_*\omega_U$, where $U$ is the smooth locus of $X$. We then obtain a Cartier structure on $\omega_X$ in this case too.
\end{example}

\begin{rem}\label{rem:def_Cartier_mod}
	\begin{enumerate}
		\item\label{itm:abuse_iterates_structure_map} Let $(\cM, \theta)$ be a Cartier module. We will abuse notations as follows: \[ \theta^e \coloneqq \theta \circ F^s_*\theta \circ \dots \circ F^{(e-1)s}_*\theta \colon F^{es}_*\cM \to \cM. \]
		\item\label{itm:notation H^0_ss} Given a proper variety $X$ and a Cartier module $(\cM, \theta)$ on $X$, we set \[ H^i_{ss, \theta}(X, \cM) \coloneqq \bigcap_{e > 0} \im\left(H^i(X, F^{es}_*\cM) \xto{\theta^e} H^i(X, \cM)\right) \inc H^i(X, \cM). \] We also write $h^i_{ss, \theta}(X, \cM) \coloneqq \dim_k H^i_{ss, \theta}(X, \cM)$, 
		and  \[ \chi_{ss, \theta}(\cM) \coloneqq \sum_i (-1)^ih^i_{ss, \theta}(X, \cM). \] Whenever the context is clear, we shall omit the $\theta$ from this notation. In order to agree with the notations from the existing literature, whenever we work with the Cartier module $\omega_X$ from \autoref{Cartier_operator}, we set \[ S^0(X, \omega_X) \coloneqq H^0_{ss}(X, \omega_X). \]
	\end{enumerate}
\end{rem}

\begin{defn}
		A Cartier module $(\cM, \theta)$ is said to be \emph{nilpotent} if $\theta^e = 0$ for some $e > 0$. These objects form a Serre subcategory of $\Coh_X^{C^s}$ (meaning that being nilpotent is stable under subobjects, quotients and extensions). The corresponding quotient category (see \stacksproj{02MS}) is denoted by $\Crys_X^{C^s}$, and the objects of this quotient category are denoted \emph{Cartier crystals}.
\end{defn}

Intuitively, working in the category of crystals allows us to forget about nilpotent phenomena. For example, a Cartier module is the zero object as a Cartier crystal if and only if it is nilpotent. More generally, a morphism $f \colon \cM \to \cN$ of Cartier modules is an isomorphism at the level of Cartier crystals if and only if both $\ker(f)$ and $\coker(f)$ are nilpotent.

As it turns out, we will only briefly work with quotient categories, and from a formal perspective. Thus, knowing the precise definitions of these objects is not necessarily relevant apart from the intuition described above in order to understand this article.

\begin{notation}\label{notation_crystals}
	If $\cM_1$ and $\cM_2$ are two Cartier modules which are isomorphic as crystals, then we shall write $\cM_1 \sim_C \cM_2$.
\end{notation}

\subsection{$V$--modules and the Fourier--Mukai transform}

As first observed by Hacon in \cite{Hacon_A_derived_category_approach_to_generic_vanishing} and then later studied by Pareschi-Popa in \cite{Pareschi_Popa_GV_sheaves_FM_transform_and_generic_vanishing}, generic vanishing can be understood by how a given sheaf of interest on an abelian variety behaves with respect to the \emph{Fourier--Mukai transform}. It turns out that the image of Cartier modules under this operation are so--called \emph{$V$--modules}. 

Let us start by explaining what these objects are. Recall that throughout the paper, we fixed an abelian variety $A$. 

\begin{defn}[{\cite[Section 5]{Hacon_Pat_GV_Geom_Theta_Divs}}]
	The \emph{relative Verschiebung} $V_{A/k} \colon \bighat{A}^{(p)} = \bighat{A^{(p)}} \to \bighat{A}$ is by definition the dual isogeny of the relative Frobenius $F_{A/k} \colon A \to A^{(p)}$.
	
	Since $k$ is perfect, it follows by definition that $F' \colon \bighat{A}^{(p)} \to \bighat{A}$ is an isomorphism. The induced morphism $V \coloneqq V_{A/k} \circ (F')^{-1} \colon \bighat{A} \to \bighat{A}$ is called the \emph{absolute Verschiebung}. By construction, the diagram
	\[ \begin{tikzcd}
		\bighat{A} \arrow[rr, "V"] \arrow[d] &  & \bighat{A} \arrow[d] \\
		\Spec k \arrow[rr, "F^{-1}"]         &  & \Spec k             
	\end{tikzcd} \] commutes.
	\begin{itemize}
		\item A \emph{$V^s$--module} on $\bighat{A}$ is a pair $(\cN, \theta)$ where $\cN$ is a coherent sheaf on $\bighat{A}$ with a morphism $\theta \colon \cN \to V^{s, *}\cN$. A morphism of $V^s$--modules is defined as for Cartier modules: it is a morphism of underlying sheaves commuting with the $V^s$--module structures. The category of $V^s$--modules on $\bighat{A}$ is denoted $\Coh_{\bighat{A}}^{V^s}$.
		\item A $V^s$--module $(\cM, \theta)$ is nilpotent if $\theta^e = 0$ for some $e \geq 0$ (we abuse notations as in \autoref{rem:def_Cartier_mod}.\autoref{itm:abuse_iterates_structure_map}). Nilpotent modules form a Serre subcategory, whose quotient is the category of \emph{$V^s$--crystals}, denoted $\Crys_{\bighat{A}}^{V^s}$. If $\cN_1$ and $\cN_2$ are $V^s$--modules which are isomorphic as crystals, we shall write $\cN_1 \sim_V \cN_2$. 
		\item A $V^s$--module $(\cN, \theta)$ is said to be \emph{injective} if $\theta$ is injective.
	\end{itemize}
\end{defn}
\begin{rem}\label{canonical_injective_V_module}
	\begin{itemize}
		\item As for Cartier modules, we will omit the $s$ in the name ``$V^s$--module''.
		\item For any $V$--module $\cN$, there exists a canonical injective $V$--module $\cN_{\injj}$ with a surjective map $\cN \surj \cN_{\injj}$, inducing an isomorphism of crystals. Explicitly, $\cN_{\injj}$ is given by the image of $\cN \to \colim V^{es, *}\cN$.
		
		Note that the induced surjection $\cN \to \cN_{\injj}$ is indeed an isomorphism of $V$--crystals, since $\ker(f)$ is a nilpotent $V$--module by construction.
	\end{itemize}
\end{rem}

\begin{defn}
		Let $\phi$ denote the action of the Frobenius on $H^1(A, \cO_A)$. By \cite[Corollary p.143]{Mumford_Abelian_Varieties}, we can write \[ H^1(A, \cO_A) \cong H^1(A, \cO_A)_{ss} \oplus H^1(A, \cO_A)_{\nilp}, \] where $H^1(A, \cO_A)_{ss}$ has a basis of fixed points by $\phi$, and $H^1(A, \cO_A)_{\nilp}$ is nilpotent under the action of $\phi$. The $p$--rank of $A$ is then by definition \[ r(A) \coloneqq \dim_k H^1(A, \cO_A)_{ss} \in \{0, \dots, g\}. \]
\end{defn}

\begin{defn}
	\begin{itemize}
		\item We say that $A$ is \emph{ordinary} if $r(A) = g$.
		\item An elliptic curve $E$ is said to be \emph{supersingular} if $r(E) = 0$.
		\item We say that $A$ is \emph{supersingular} if it is isogenous to a product of supersingular elliptic curves (in particular, $r(A) = 0$, although this is not equivalent in general, see e.g. \cite[Table 4.3]{Pries_A_short_guide_to_p_torsion_of_AV_in_cgar_p}).
		\item By Poincaré's complete reducibility theorem (see e.g. \cite[Theorem 12.2]{Moonen_van_der_Geer_Abelian_varieties}), any abelian variety is isogeneous to a product of simple abelian varieties. We say that $A$ has \emph{no supersingular factor} if all these simple pieces are not supersingular. 
	\end{itemize}
\end{defn}

\begin{prop}\label{basic facts about AV}
	The following holds: 
	\begin{enumerate}
		\item\label{itm:ordinary iff Verschiebung etale} the absolute Verschiebung morphism $V$ is étale if and only if $A$ is ordinary;
		\item\label{itm:p_rank stable under isogenies} two isogeneous abelian varieties have the same $p$--rank;
		\item\label{itm:A_surjects_to_B_and_A_ord_implies_B_ord} if $f \colon A \to B$ is a surjective morphism of abelian varieties and $A$ is ordinary, then so is $B$;
		\item\label{itm:ord_no_ss_factor} an ordinary abelian variety has no supersingular factor.
	\end{enumerate}
\end{prop}
\begin{proof}
	It respectively follows from \cite[Proposition 2.3.2]{Hacon_Pat_GV_Characterization_Ordinary_AV}, \cite[p. 147]{Mumford_Abelian_Varieties}, \cite[Lemma 2.3.4]{Hacon_Pat_GV_Characterization_Ordinary_AV} and \cite[Lemma 2.3.5]{Hacon_Pat_GV_Characterization_Ordinary_AV}.
\end{proof}

We now recall the definition of the (symmetric) Fourier--Mukai transform (see \cite{Mukai_Fourier_Mukai_transform, Schnell_Fourier_Mukai_transform_made_easy}).
\begin{defn}
	\begin{itemize}
		\item The \emph{Fourier--Mukai transforms} are the functors $R\bighat{S} \colon D_{\coh}(A) \to D_{\coh}(\bighat{A})$ and $RS \colon D_{\coh}(\bighat{A}) \to D_{\coh}(A)$ defined by 
		\[ R\hat{S}(\cM^{\bullet}) \coloneqq Rp_{\bighat{A}, *}(Lp_A^*\cM^{\bullet} \otimes \cP)  \esp \mbox{ and } \esp RS(\cN^{\bullet}) \coloneqq Rp_{A, *}(Lp_{\bighat{A}}^*\cN^{\bullet} \otimes \cP), \] where $p_A \colon A \times \bighat{A} \to A$ and $p_{\bighat{A}} \colon A \times \bighat{A} \to \bighat{A}$ denote the projections.
		\item The \emph{symmetric Fourier--Mukai transforms} are the functors $\FM_A \coloneqq R\bighat{S} \circ D_A$ and $\FM_{\bighat{A}} \coloneqq RS \circ D_{\bighat{A}}$, where $D_A \coloneqq \cR\HHom(-, \omega_A[g])$ (and similarly for $D_{\bighat{A}}$).
	\end{itemize}
\end{defn}

\begin{rem}
	Although the precise definition of the (symmetric) Fourier--Mukai transforms may look technical, its properties are more important than the precise definition itself. In particular, the reader may accept \autoref{properties_Fourier_Mukai_transform} and \autoref{GV_Hacon_Pat} as black boxes which is sufficient for doing research in this field.
\end{rem}

The Fourier--Mukai transform has the following important properties (independently of the characteristic), which are central in generic vanishing theory. First recall that a vector bundle $\cV$ is \emph{unipotent} if it admits a filtration by sub--vector bundles whose graded pieces are isomorphic to $\cO_A$.

\begin{thm}\label{properties_Fourier_Mukai_transform}
	Let $\cM$ be a coherent sheaf on $A$.
	\begin{enumerate}
		\item\label{itm:support} For all $i \notin \{-g, \dots, 0\}$, then $\cH^i(\FM_A(\cM)) = 0$ for all $i \notin \{-g, \dots 0\}$. The same holds for $\FM_{\bighat{A}}$.
		\item\label{itm:equiv_cat} The functors $\FM_A$ and $\FM_{\bighat{A}}$ are inverses of each other, and hence equivalences of categories.
		\item\label{itm:equiv_cat_unipotent} We have $\FM_A(\omega_A) = k(0)$ (i.e. the skyscraper sheaf at $0 \in \bighat{A}$). In particular, the functor $\FM_A$ induces an equivalence of categories between unipotent vector bundles and coherent modules on $\bighat{A}$ supported at the point $0$.
		\item\label{itm:Fourier_Mukai_and_translation} For any $\alpha \in \bighat{A}$, \[ \FM_{\bighat{A}} \hspace{1 mm}\circ \esp T_\alpha^* \cong \cP_{-\alpha} \otimes \FM_{\bighat{A}},\] where $T_\alpha$ denotes the translation map by $\alpha$.
		\item\label{itm:behaviour_pushforwards_and_pullbacks} Let $f \colon A \to B$ be a morphism of abelian varieties, and let $\bighat{f} \colon \bighat{B} \to \bighat{A}$ denote the dual morphism. Then we have \[ \FM_B \: \circ \: Rf_* = L\bighat{f}^* \circ \FM_A. \]
		Here are two important special cases:
		\begin{itemize}
			\item if $\cM$ is a coherent sheaf on $A$, then for all $i \geq 0$, \[ H^i(A, \cM)^{\vee} \cong \Tor_i(\FM_A(\cM), k(0)). \]
			\item if $\cM$ is a Cartier module, then each $\cH^i(\FM_A(\cM))$ has an induced $V$--module structure.
		\end{itemize}
		\item\label{itm:more_gen_version} For any $\alpha \in \Pic^0(A)$, we have \[ H^i(A, \cM \otimes \cP_{\alpha})^{\vee} \cong \Tor_i(\FM_A(\cM), k(-\alpha)). \] 
		\item\label{itm:support_H0} We have that \[ \Supp \left(\cH^0\FM_A(\cM)\right) = \set{ \alpha \in \bighat{A}}{ H^0(A, \cM \otimes \cP_{-\alpha}) \neq 0}.\]
	\end{enumerate}
\end{thm}
\begin{proof}
	Part \autoref{itm:support} follows from the first lines in the proof of \cite[Theorem 3.1.1]{Hacon_Pat_GV_Characterization_Ordinary_AV}. Parts \autoref{itm:equiv_cat}, \autoref{itm:Fourier_Mukai_and_translation} and \autoref{itm:behaviour_pushforwards_and_pullbacks} follow respectively from \cite[Theorem 3.2, Propositions 4.1 and 5.1]{Schnell_Fourier_Mukai_transform_made_easy}. Part \autoref{itm:equiv_cat_unipotent} can be found in \cite[Example 2.9]{Mukai_Fourier_Mukai_transform} (it also follows from \cite[Corollary 1 p. 129]{Mumford_Abelian_Varieties}). Let us show the last two properties for the sake of the reader (even though they are well--known). We start with \autoref{itm:more_gen_version}, so let $\alpha \in \bighat{A}$. Then we have
	
	\begin{align*}
		H^i(A, \cM \otimes \cP_\alpha)^{\vee} & \expl{\cong}{\autoref{itm:behaviour_pushforwards_and_pullbacks}} \Tor_i(\FM_A(\cM \otimes \cP_\alpha), k(0)) \\
		& \expl{\cong}{\autoref{itm:Fourier_Mukai_and_translation}} \Tor_i(T_{\alpha}^*\FM_A(\cM), k(0)) \\
		& \cong \Tor_i(\FM_A(\cM), k(- \alpha)). \\
	\end{align*}

	Let us finally show \autoref{itm:support_H0}. Since $\FM_A(\cM)$ is supported in non--positive degrees by \autoref{itm:support}, it follows that \[ \Tor_0(\FM_A(\cM), k(\alpha)) \cong \Tor_0(\cH^0\FM_A(\cM), k(\alpha)) = \cH^0\FM_A(\cM) \otimes k(\alpha)\] for all $\alpha$. Hence, 
	\[ \Supp(\cH^0\FM_A(\cM)) \expl{=}{Nakayama's lemma} \set{\alpha \in \bighat{A}}{\cH^0\FM_A(\cM) \otimes k(\alpha) \neq 0} \expl{=}{\autoref{itm:more_gen_version}} \set{ \alpha \in \bighat{A}}{ H^0(A, \cM \otimes \cP_{-\alpha}) \neq 0}. \]
\end{proof}

In \autoref{section_first_vanishing_thm}, we will need a few more properties of this functor, which we will explain there.

\subsection{A few commutative algebra lemmas}
Here we collect a few results from commutative algebra which will be useful later on.

\begin{lem}\label{meaning of vanishing of Tor}
	Let $(R, \fm, k)$ be a Noetherian local ring, let $M \neq 0$ be a finitely generated $R$--module and let $i \geq 0$. Then \[ \Tor_i(M, k) = 0 \iff \projdim(M) < i \iff \depth(M) > \depth(R) - i.  \]
\end{lem}
\begin{proof}
	The first equivalence holds by \cite[Lemma 1.(ii) in Chapter 19]{Matsumura_Commutative_Ring_Theory}, and the second equivalence is exactly the Auslander--Buchsbaum's formula (see e.g. \cite[Theorem 19.1]{Matsumura_Commutative_Ring_Theory}).
\end{proof}

The following result is well--known to experts, but unable to find a reference, we prove it now.

\begin{cor}\label{locus of non-zero Tor is closed}
	Let $\cM$ be a coherent sheaf on a Noetherian scheme $X$. Then for all $i \geq 0$, the locus \[ \set{x \in X}{\Tor_i(\cM, k(x)) \neq 0} \] is closed.
\end{cor}
\begin{proof}
	When $i = 0$, we know by Nakayama's lemma that this locus is exactly the support of $\cM$, which is closed. Hence, assume that $i \geq 1$. By \autoref{meaning of vanishing of Tor}, we have to show that the locus \[ \set{x \in X}{\projdim(\cM_x) < i} \] is open. Suppose that $l \coloneqq \projdim(\cM_x) < i$, let $U$ be an affine open around $x$, and set $U = \Spec A$, and $\cM$ correspond to the $A$--module $M$ over $U$. Furthermore, let \[  0 \to K \to P_{l - 1} \to \dots  P_0 \to M \to 0\] be an exact sequence with each $P_i$ finitely generated projective and where $K$ is the kernel of $P_l \to P_{l - 1}$. Since $\cM_x$ has projective dimension $l$, $K$ is necessarily projective at $x$. Therefore $K$ will also be free in a neighbourhood $x$, so the projective dimension of $\cM$ is then smaller or equal to $l$ in a neighborhood of $x$.
\end{proof}
\begin{lem}\label{lemma Tor(V*) = 0 at x iff Tor = 0 at p(x)}
	Let $R \to S$ be a flat morphism of local rings with respective residue field $k_R$ and $k_S$, and let $M$ be a finitely generated $R$--module. Then \[ \Tor_i(M, k_R) = 0 \iff \Tor_i(M \otimes_R S, k_S) = 0. \]
\end{lem}
\begin{proof}
	By \autoref{meaning of vanishing of Tor}, it is enough to show that $\projdim(M) = \projdim(M \otimes_R S)$. We can then argue as in the proof of \autoref{locus of non-zero Tor is closed} to see that this follows from the fact that $M$ is projective if and only if $M \otimes_R S$ is projective, see \stacksproj{00O1}. \qedhere
	
%
%
\end{proof}

\begin{cor}\label{Tor(V*) = 0 at x iff Tor = 0 at p(x)}
	Let $\cM$ be a coherent sheaf on $\bighat{A}$. Then \[ \Tor_i(\cM, k(p(\alpha))) = 0 \iff \Tor_i(V^*\cM, k(\alpha)) = 0. \] 
\end{cor}
\begin{proof}
	Use \autoref{lemma Tor(V*) = 0 at x iff Tor = 0 at p(x)} with $V \colon \bighat{A} \to \bighat{A}$ at $\alpha$, together with the fact that set--theoretically, $V = p$ (indeed, if $\alpha \in \bighat{A} = \Pic^0(A)$, then by construction $V(\alpha) = F^*\alpha \cong \alpha^{\otimes p}$).
\end{proof}

\section{Generic vanishing}

\subsection{The main technical vanishing statement}\label{section_first_vanishing_thm}

Let us recall the definition given in the introduction of ``sheaves that satisfy generic vanishing'', namely \emph{GV--sheaves}.

\begin{defn}\label{def:GV_sheaf}
	Let $\cM$ be a coherent sheaf on $A$. We say that $\cM$ is a \emph{GV--sheaf} if for all $i \neq 0$, \[ \cH^i\FM_A(\cM) = 0. \]
\end{defn}

\begin{rem}
	As shown in \cite{Hacon_A_derived_category_approach_to_generic_vanishing} and \cite{Pareschi_Popa_GV_sheaves_FM_transform_and_generic_vanishing}, this definition is equivalent to the one given in the introduction, namely that the closed subsets \[ V^i(\cM) \coloneqq \set{\alpha \in \Pic^0(A)}{H^i(A, \cM \otimes \alpha) \neq 0} \inc \Pic^0(A) \] have codimension at least $i$. 
\end{rem}

In characteristic zero, fundamental theorems at the origin of generic vanishing theory is the fact that (higher) pushforwards of $\omega_X$ are GV--sheaves:

\begin{thm}[{\cite{Green_Lazarsfeld_Generic_vanishing, Hacon_A_derived_category_approach_to_generic_vanishing}, see also \cite[1.1.1]{Hacon_Pat_GV_Geom_Theta_Divs}}]\label{GV_char_zero}
	Let $a \colon X \to A$ be any morphism of complex varieties, where $X$ is smooth projective and $A$ is an abelian variety. Then each higher pushforward $R^ja_*\omega_X$ is a GV--sheaf.
\end{thm}

Unfortunately, this is known to fail in positive characteristic (\cite{Filipazzi_GV_fails_in_pos_char}). Hacon and Patakfalvi found the following replacement of this vanishing on an abelian variety, stating roughly that Cartier modules are ``GV--sheaves up to nilpotence''.

\begin{thm}[{\cite[Theorem 1.3.1]{Hacon_Pat_GV_Characterization_Ordinary_AV}}]\label{GV_Hacon_Pat}
	Let $\cM$ be a Cartier module on $A$. Then for all $i \neq 0$, the $V$--module \[ \cH^i\FM_A(\cM) \] is nilpotent.
\end{thm}

The main objective of this subsection is to give a proof of this result. Unlike later, our proof will essentially be the same as that in \cite{Hacon_Pat_GV_Characterization_Ordinary_AV}, and we really only give it for the sake of completeness. \\

Before going to the proof, let us comment on the theorem above. Even though \autoref{GV_Hacon_Pat} is mostly applied to the Cartier module $a_*\omega_X$, it is important to point out that this theorem applies to \emph{any} Cartier module. This is very useful in practice.

One may be surprised, as this seems to be much more general than its characteristic zero counterpart. For example, we can put a Cartier structure on any coherent sheaf: the trivial one. But then, note that \autoref{GV_Hacon_Pat} becomes a trivial statement, and it is of course useless in applications. A natural follow-up question is then the following: when does a coherent sheaf on an abelian variety admit non--trivial Cartier module structure? It turns out that this itself is some sort of positivity property. For example, if $L$ is a line bundle on $A$ and $F^s_*L \to L$ is a non--zero morphism, then $L$ is in fact a GV--sheaf. Indeed, the same computation as in \cite[Beginning of Page 12]{Blickle_Schwede_p-1_linear_maps_in_algebra_and_geometry} shows that $(L \otimes \omega_A^{\vee})^{\otimes (p^s - 1)} \cong L^{\otimes (p^s - 1)}$ is effective. We can then apply the following result.

\begin{lemma}\label{example_when_line_bundles_are_GV}
	Let $L = \cO_A(D)$ be a line bundle on $A$. Then $L$ is a GV--sheaf if and only if $D$ is numerically equivalent to an effective $\bQ$--divisor.
\end{lemma} 
\begin{proof}
	Assume first that $D$ is numerically equivalent to an effective $\bQ$--divisor, and let us show that $L$ is a GV--sheaf. By assumption, there exists $n > 0$ and $\alpha \in \Pic^0(A)$ such that $H^0(A, L^{\otimes n} \otimes \alpha) \neq 0$. Pick $\beta \in \Pic^0(A)$ such that $\beta^{\otimes n} \cong \alpha$ (see \cite[Application 2 p.62]{Mumford_Abelian_Varieties}). By \autoref{properties_Fourier_Mukai_transform}.\autoref{itm:Fourier_Mukai_and_translation}, it is enough to show that the $\bQ$--effective line bundle $L_0 \coloneqq L \otimes \beta$ is a GV--sheaf.
	
	Note that for any $m \geq 1$, it is enough to show that $m^*L_0$ is a GV--sheaf (this follows from \autoref{properties_Fourier_Mukai_transform}.\autoref{itm:behaviour_pushforwards_and_pullbacks}). Since $L_0$ and $L_0 \otimes (-1)^*L_0$ are $\bQ$--effective (and hence semiample by the argument of \cite[Application p. 60, (i) $\implies$ (iii)]{Mumford_Abelian_Varieties}), there exists $m \gg 0$ such that \[ m^*L_0 \expl{\cong}{\cite[{Corollary 3 p. 59}]{Mumford_Abelian_Varieties}} L_0^{\otimes m} \otimes \left(L_0 \otimes (-1)^*L_0\right)^{\otimes \frac{m(m - 1)}{2}} \] is globally generated. We then reduced to the case where $L_0$ is globally generated.
	
	In this case, we know by \cite[Discussion after Lemma 2.21]{Moonen_van_der_Geer_Abelian_varieties} (see also Section 8.2 in \emph{loc.cit.}) that the Stein factorization of the morphism given by $L_0$ maps to an abelian variety, so we can write $L_0 \cong \pi^*M$, where $\pi \colon A \to B$ is a fibration to an abelian variety $B$, and $M$ is ample on $B$. An ample line bundle is automatically a GV--sheaf by Kodaira vanishing (which holds on any abelian variety by \cite[Application p.60, Thm. p.150 and Cor. p. 159]{Mumford_Abelian_Varieties}), so $\pi^*M$ is also a GV--sheaf by \autoref{properties_Fourier_Mukai_transform}.\autoref{itm:behaviour_pushforwards_and_pullbacks}. 
	
	We finish with proving the converse direction, so assume that $L$ is a GV--sheaf. Then by assumption and \autoref{properties_Fourier_Mukai_transform}.\autoref{itm:equiv_cat}, we know that $\cH^0\FM_A(L) \neq 0$, so its support is non--empty. We then obtain by \autoref{properties_Fourier_Mukai_transform}.\autoref{itm:gv_link_with_section} that $L \otimes \alpha$ is effective for some $\alpha \in \Pic^0(A)$, thereby concluding the proof.
\end{proof}

\begin{warning}
	The situation is more complicated for higher rank vector bundles. For example, there exists an abelian surface $A$ and a vector bundle $\cM$ admitting a Cartier module structure $F_*\cM \to \cM$ which is surjective, and which is not a GV--sheaf. Namely, it is given by $a_*\omega_S$, where $a \colon S \to A$ is the morphism given by the main result of \cite{Filipazzi_GV_fails_in_pos_char}.
\end{warning}

Let us now get to the proof of \autoref{GV_Hacon_Pat}. Again, \emph{we stress that the proof given here is the same the original one}, and we only include it for the sake of completeness. Let us first state more properties of the Fourier--Mukai transform, which will only be used in this section.

\begin{notation}
	Given an ample line bundle $L$ on $\bighat{A}$, the isogeny $\phi_L \colon \bighat{A} \to A$ is defined by sending $\alpha \in \bighat{A}$ to the line bundle $T_\alpha^*L \otimes L^{-1} \in \Pic^0(\bighat{A}) = A$.
	
	We also set $\hat{L} \coloneqq RS(L)$. An immediate application of Kodaira vanishing and cohomology and base change shows that $RS(L) = R^0S(L)$.
\end{notation}

\begin{lemma}\label{further_props_FM}
	The following properties hold:
	\begin{enumerate}
		\item\label{itm:phi_oullback_easy} $\phi_L^*(\hat{L}^{\vee}) \cong L^{\oplus h^0(\bighat{A}, L)}$;
		\item\label{itm:cohom_FM} for all $i \in \bZ$ and $\cM \in \Coh_A$, we have an isomorphism $H^{-i}(\bighat{A}, \FM_A(\cM) \otimes L) \cong H^i(A, \cM \otimes \hat{L}^{\vee})^{\vee}$.
	\end{enumerate}
\end{lemma}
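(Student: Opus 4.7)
For part (1), my plan is to deduce it from the classical Mukai--Mumford formula
\[
\phi_L^* \hat{L} \cong H^0(\bighat{A}, L)^{\vee} \otimes L^{-1},
\]
which upon dualizing gives exactly $\phi_L^*(\hat{L}^{\vee}) \cong L^{\oplus h^0(\bighat{A}, L)}$. The classical formula itself rests on a seesaw/flat-base-change computation: by definition of $\phi_L$, one has an identity of line bundles on $\bighat{A} \times \bighat{A}$ of the form $(\phi_L \times \id)^*\cP \cong m^*L \otimes p_1^*L^{-1} \otimes p_2^*L^{-1}$. Pulling back $RS(L) = Rp_{A,*}(p_{\bighat{A}}^*L \otimes \cP)$ via $\phi_L$ and applying flat base change plus the projection formula along $\phi_L \times \id$ reduces the computation to pushing forward $m^*L \otimes p_2^*L^{-1}$ along $p_1$, and Kodaira vanishing for $L$ on $\bighat{A}$ identifies the result with $H^0(\bighat{A}, L)^{\vee} \otimes L^{-1}$.

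For part (2), the plan is a direct Fourier--Mukai manipulation followed by Serre duality. Starting from the definitions and applying the projection formula along $p_A \colon A \times \bighat{A} \to A$, I would rewrite
\[
R\Gamma\bigl(\bighat{A}, \FM_A(\cM) \otimes L\bigr) \cong R\Gamma\bigl(A, D_A(\cM) \otimes Rp_{A,*}(p_{\bighat{A}}^*L \otimes \cP)\bigr) = R\Gamma\bigl(A, D_A(\cM) \otimes \hat{L}\bigr).
\]
Because $L$ is ample on $\bighat{A}$, for every $a \in A$ the twist $L \otimes \cP_a$ remains ample, so Kodaira vanishing together with cohomology-and-base-change implies that $\hat{L} = R^0S(L)$ is locally free. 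Hence $D_A(\cM) \otimes \hat{L} \cong \sRHom(\cM \otimes \hat{L}^{\vee}, \omega_A[g])$, and Serre duality on $A$ gives
\[
H^{-i}\bigl(\bighat{A}, \FM_A(\cM) \otimes L\bigr) \cong \Ext^{g - i}\bigl(\cM \otimes \hat{L}^{\vee}, \omega_A\bigr) \cong H^i\bigl(A, \cM \otimes \hat{L}^{\vee}\bigr)^{\vee},
\]
as desired.

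\textbf{Main obstacle.} Neither step is particularly deep; the real care lies in duality and degree bookkeeping. In (2), the only nontrivial input is the local freeness of $\hat{L}$, which is essentially immediate from ampleness. In (1), the subtlety is tracking the rank: one must recognize that $H^0(\bighat{A}, L)^{\vee}$ has dimension $h^0(\bighat{A}, L)$, so that $H^0(\bighat{A}, L)^{\vee} \otimes L^{-1}$ really is $(L^{-1})^{\oplus h^0(\bighat{A}, L)}$, whose dual is $L^{\oplus h^0(\bighat{A}, L)}$. Both parts could alternatively be short-circuited by citing the relevant formulas in Mumford's \emph{Abelian Varieties} or Mukai's original paper.
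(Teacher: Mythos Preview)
Your proposal is correct and follows the paper's proof essentially line for line. For (1) the paper simply cites Mukai's Proposition~3.11 (exactly the formula you dualize), and for (2) it runs the same projection-formula chain to reach $\RGamma(A, D_A(\cM) \otimes \hat{L}) \cong \RGamma(A, D_A(\cM \otimes \hat{L}^{\vee}))$ and then applies Grothendieck--Serre duality, just as you do.
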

\begin{proof}
	The first statement is a special case of \cite[Proposition 3.11]{Mukai_Fourier_Mukai_transform}. To show the second statement, note that
	\begin{align*} 
		\RGamma(\bighat{A}, \FM_A(\cM) \otimes L) & \cong \RGamma(\bighat{A}, Rp_{\bighat{A}, *}(p_A^*D_A(\cM) \otimes \cP) \otimes L) \\
		& \expl{\cong}{projection formula} \RGamma(\bighat{A}, Rp_{\bighat{A}, *}(p_A^*D_A(\cM) \otimes \cP \otimes p_{\bighat{A}}^*L)) \\
		& \cong \RGamma(A \times \bighat{A}, p_A^*D_A(\cM) \otimes \cP \otimes p_{\bighat{A}}^*L) \\
		& \cong \RGamma(A, Rp_{A, *}(p_A^*D_A(\cM) \otimes \cP \otimes p_{\bighat{A}}^*L)) \\
		& \expl{\cong}{projection formula and definition of $RS$} \RGamma(A, D_A(\cM) \otimes \hat{L}) \\
		& \cong \RGamma(A, D_A(\cM \otimes \hat{L}^{\vee})) \\
		& \expl{\cong}{Grothendieck duality, see e.g. \stacksproj{0A9Q} with $f \colon A \to \Spec k$} \RGamma(A, \cM \otimes \hat{L}^{\vee})^{\vee}.
	\end{align*}
	so we conclude the proof by taking $\cH^{-i}$ on both sides.
\end{proof}

\begin{defn}
	We say that an inverse system of coherent sheaves $\{\cM_e\}_{e \geq 1}$ on $A$ satisfies $(*)$ if the following holds: for any ample line bundle $L$ on $\bighat{A}$, there exists $e \geq 0$ such that for all $f \geq e$ and $i > 0$, we have the vanishing \[ H^i(A, \cM_f \otimes \hat{L}^{\vee}) = 0. \]
\end{defn}

\begin{lemma}\label{prop_star_implies_GV_system}
	Let $\{\cM_e\}_{e \geq 0}$ be an inverse system of coherent sheaves on $A$ satisfying property $(*)$. Then for all $j \neq 0$, \[ \colim_e \cH^j(\FM_A(\cM_e)) = 0. \] In other words, for all $e \geq 0$, there exists $f \geq e$ such that the morphism $\cH^j(\FM_A(\cM_e)) \to$ $ \cH^j(\FM_A(\cM_f))$ is zero for all $j \neq 0$.
\end{lemma}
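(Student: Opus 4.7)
The plan is to translate property $(*)$ into a hypercohomology vanishing statement on $\bighat{A}$ via \autoref{further_props_FM}.\autoref{itm:cohom_FM}, pass to the directed colimit over $f$, and then run an induction with the hypercohomology spectral sequence. I would first choose, compatibly in $e$, bounded representatives $\cF^{\bullet}_e$ of $\FM_A(\cM_e)$ as complexes of quasi-coherent sheaves on $\bighat{A}$, so that the transitions are honest maps of complexes. Setting $\cF^{\bullet}_{\infty} \coloneqq \colim_e \cF^{\bullet}_e$, define
\[ \cG^q \coloneqq \cH^q(\cF^{\bullet}_{\infty}) = \colim_e \cH^q(\FM_A(\cM_e)), \]
which vanishes outside $[-g, 0]$ by \autoref{properties_Fourier_Mukai_transform}.\autoref{itm:support}.

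Fix any ample line bundle $L$ on $\bighat{A}$. By property $(*)$, $H^i(A, \cM_f \otimes \hat{L}^{\vee}) = 0$ for all $i > 0$ and sufficiently large $f$, and so \autoref{further_props_FM}.\autoref{itm:cohom_FM} gives $H^{< 0}(\bighat{A}, \cF^{\bullet}_f \otimes L) = 0$ for $f \gg 0$. Since $\bighat{A}$ is Noetherian, hypercohomology commutes with directed colimits of complexes of sheaves (via the \v{C}ech bicomplex, which turns directed colimits of sheaves into directed colimits of modules), so
\[ H^{< 0}(\bighat{A}, \cF^{\bullet}_{\infty} \otimes L) = \colim_f H^{< 0}(\bighat{A}, \cF^{\bullet}_f \otimes L) = 0. \]

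With this vanishing in hand, I would feed it into the hypercohomology spectral sequence
\[ E_2^{p, q} = H^p(\bighat{A}, \cG^q \otimes L) \Longrightarrow H^{p + q}(\bighat{A}, \cF^{\bullet}_{\infty} \otimes L), \]
whose abutment vanishes for $p + q < 0$, and prove $\cG^q = 0$ for all $-g \leq q \leq -1$ by induction on $q$ (the cases $q < -g$ being trivial). Assuming $\cG^{q'} = 0$ for all $q' < q$: incoming differentials to $E_r^{0, q}$ originate in $E_r^{-r, q + r - 1}$, whose $E_2$-term vanishes because the cohomology of any quasi-coherent sheaf on a scheme is concentrated in non-negative degrees; outgoing differentials from $E_r^{0, q}$ land in $E_r^{r, q - r + 1}$, whose $E_2$-term $H^r(\bighat{A}, \cG^{q - r + 1} \otimes L)$ vanishes by induction since $q - r + 1 < q$ for $r \geq 2$. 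Therefore $E_{\infty}^{0, q} = H^0(\bighat{A}, \cG^q \otimes L)$, a quotient of $H^q(\bighat{A}, \cF^{\bullet}_{\infty} \otimes L) = 0$. Since $L$ was arbitrary and any nonzero coherent subsheaf of $\cG^q$ becomes globally generated with nonzero global sections after twisting by a sufficiently ample $L$, this forces $\cG^q = 0$.

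The stated reformulation then follows: each $\cH^q(\FM_A(\cM_e))$ is coherent, hence a compact object of quasi-coherent sheaves on the Noetherian scheme $\bighat{A}$, so
\[ \Hom(\cH^q(\FM_A(\cM_e)), \cG^q) = \colim_f \Hom(\cH^q(\FM_A(\cM_e)), \cH^q(\FM_A(\cM_f))), \]
and vanishing of the left-hand side forces the transition map to be zero for all sufficiently large $f$, uniformly over the finitely many relevant $q \neq 0$. The main technical subtlety I anticipate is rigorously justifying that hypercohomology commutes with the directed colimit of complexes; this is standard via the \v{C}ech bicomplex on a qcqs scheme, but deserves a careful treatment in the write-up.
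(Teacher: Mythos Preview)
Your argument is correct and essentially the same as the paper's: both translate property $(*)$ via \autoref{further_props_FM}.\autoref{itm:cohom_FM} into vanishing of negative hypercohomology of the colimit complex twisted by an ample $L$, and then use the hypercohomology spectral sequence together with an induction on the cohomological degree. The paper phrases the induction as a minimal-counterexample argument (``let $j$ be the smallest index with $\colim\cH^j\neq 0$'') and chooses $L$ adapted to a single $e_0$, whereas you run the spectral sequence explicitly and quantify over all ample $L$; these are cosmetic differences, and your more detailed treatment of the colimit-commutation step is a welcome addition.
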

\begin{proof}
	For any $e \geq 0$, write $\cN_e^{\bullet} \coloneqq \FM_A(\cM_e)$. The vanishing is immediate for $j > 0$ by \autoref{properties_Fourier_Mukai_transform}.\autoref{itm:support}. Assume by contradiction that the result does not hold, and let $j < 0$ be the smallest integer such that $\colim \cH^j(\cN_e^{\bullet}) \neq 0$. Fix an integer $e_0 > 0$ such that the natural map $\cH^j(\cN_{e_0}^{\bullet}) \to \colim \cH^j(\cN_e^{\bullet})$ is non--zero, and let $L$ be an ample line bundle such that $\cH^j(\cN_{e_0}^{\bullet}) \otimes L$ is globally generated. By hypothesis, the morphism \[ H^0\left(\bighat{A}, \cH^j(\cN_{e_0}^{\bullet}) \otimes L\right) \longrightarrow H^0\left(\bighat{A}, \colim_e \cH^j(\cN_e^{\bullet}) \otimes L\right)\] is then non--zero. Let us show that this is impossible, by showing that the right--hand side vanishes. We have that
	\begin{align*}
		H^0(\bighat{A}, \colim \cH^j(\cN_e^{\bullet}) \otimes L) & \expl{\cong}{$j$ is minimal, so the complex $\colim \cN_e^{\bullet} \otimes L$ lives in degrees $\geq j$} H^j(\bighat{A}, \colim \cN_e^{\bullet} \otimes L) \\
		& = \colim H^j(\bighat{A}, \cN_e^{\bullet} \otimes L) \\
		& \expl{\cong}{\autoref{further_props_FM}.\autoref{itm:cohom_FM}} \colim H^{-j}(A, \cM_e \otimes \hat{L}^{\vee})^{\vee}.
	\end{align*}
	We deduce by property $(*)$ that this group vanishes. \qedhere
	
	%

\end{proof}

\begin{lemma}\label{Cartier_mod_gives_GV_system}
	Let $(\cM, \theta)$ be a Cartier module on $A$. Then the inverse system \[ \{\theta^e \colon F^{e(s + 1)}_*\cM \to F^{es}_*\cM\} \] satisfies $(*)$.
\end{lemma}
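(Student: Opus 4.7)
My plan is to reduce $(*)$ to a straightforward Serre vanishing on $\hat A$ by means of the key identity $\phi_L^*\hat L^\vee \cong L^{\oplus N}$ of \autoref{further_props_FM}.\autoref{itm:phi_oullback_easy}, and then descend back to $A$.

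First, since $F^{es}$ is a finite morphism, the projection formula gives
\[
H^i\bigl(A,\, F^{es}_*\cM \otimes \hat L^\vee\bigr) \;\cong\; H^i\bigl(A,\, \cM \otimes F^{es,\,*}\hat L^\vee\bigr).
\]
Pulling back along the isogeny $\phi_L \colon \hat A \to A$ and using that the absolute Frobenius commutes with every morphism,
\[
\phi_L^*\bigl(\cM \otimes F^{es,\,*}\hat L^\vee\bigr) \;\cong\; \phi_L^*\cM \otimes F^{es,\,*}\bigl(L^{\oplus N}\bigr) \;\cong\; \phi_L^*\cM \otimes \bigl(L^{p^{es}}\bigr)^{\oplus N},
\]
where $N = h^0(\hat A, L)$. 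Since $L$ is ample on $\hat A$, Serre vanishing produces an $e_0 = e_0(\cM, L)$ such that for all $e \geq e_0$ and all $i > 0$,
\[
H^i\bigl(\hat A,\, \phi_L^*\cM \otimes (L^{p^{es}})^{\oplus N}\bigr) \;=\; 0.
\]

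By finiteness and flatness of $\phi_L$, together with the projection formula, this is equivalent to
\[
H^i\bigl(A,\, (\cM \otimes F^{es,\,*}\hat L^\vee) \otimes \phi_{L,\,*}\cO_{\hat A}\bigr) \;=\; 0, \qquad i > 0,\ e \geq e_0.
\]
The main remaining task --- and the technical heart of the proof --- is to pass from this vanishing twisted by $\phi_{L,\,*}\cO_{\hat A}$ to the desired untwisted vanishing of $H^i(A,\,\cM \otimes F^{es,\,*}\hat L^\vee)$. When $\phi_L$ is separable, for instance whenever $\deg \phi_L = N^2$ is coprime to $p$, the normalized trace splits $\cO_A$ off $\phi_{L,\,*}\cO_{\hat A}$, realizing $\cM \otimes F^{es,\,*}\hat L^\vee$ as a direct summand of its twist, and the conclusion is immediate. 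In the general inseparable case, the structure sheaf need not split off, and I expect one must argue iteratively using the short exact sequence
\[
0 \to \cO_A \to \phi_{L,\,*}\cO_{\hat A} \to Q \to 0
\]
and the flexibility that $e$ can be taken arbitrarily large in $(*)$, bootstrapping the vanishing across the successive quotients in a filtration of $\phi_{L,\,*}\cO_{\hat A}$. This descent step is where I anticipate the main obstacle to lie.
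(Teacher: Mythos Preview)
Your opening reduction---projection formula, pulling back along $\phi_L$, and invoking $\phi_L^*\hat L^\vee\cong L^{\oplus N}$---matches the paper, and you have correctly located the obstacle: in characteristic $p$ the isogeny $\phi_L$ is typically inseparable, so $\cO_A$ need not split off $\phi_{L,*}\cO_{\bighat{A}}$ and the vanishing on $\bighat{A}$ does not descend for free. However, your proposed bootstrap via a filtration of $\phi_{L,*}\cO_{\bighat{A}}$ does not close the gap. Writing $\cF_e\coloneqq\cM\otimes F^{es,*}\hat L^\vee$, the long exact sequence attached to $0\to\cO_A\to\phi_{L,*}\cO_{\bighat{A}}\to Q\to 0$ only shows that $H^i(A,\cF_e)$ is a quotient of $H^{i-1}(A,\cF_e\otimes Q)$; you have no independent handle on the latter, and attacking $\cF_e\otimes Q$ (or $\cF_e$ tensored with any graded piece of a finer filtration) by the same pullback-along-$\phi_L$ method reproduces exactly the descent problem you started with. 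Enlarging $e$ does not help either, since the filtration is independent of $e$.

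The paper sidesteps descent by \emph{strengthening} the target: it proves $H^i(A,\,F^{es}_*\cM\otimes\hat L^\vee\otimes P)=0$ for all $i>0$ and \emph{all} $P\in\Pic^0(A)$ at once. By cohomology and base change this is equivalent to $R\hat S(\cF_e)$ being a sheaf in degree zero, and that property can be tested after the finite faithfully flat pushforward $\hat\varphi_{L,*}$. Mukai's functoriality $\hat\varphi_{L,*}\circ R\hat S\cong R\hat S\circ\phi_L^*$ then turns the question (again via cohomology and base change) into the vanishing of $H^i(\bighat{A},\,\phi_L^*\cM\otimes L^{p^{es}}\otimes P)$ for all $P\in\Pic^0(\bighat{A})$, which follows from Fujita vanishing; Serre vanishing alone would not give the required uniformity in $P$. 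So the missing ingredient is to pass through the Fourier--Mukai picture, where ``descent'' becomes the tautology that a complex is a sheaf if and only if its finite flat pushforward is.
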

\begin{proof}
	Fix $L$ ample on $A$ and $e \gg 0$. Let us show that for all $i > 0$ and all $\alpha \in \Pic^0(A)$, we have $H^i(A, F^{es}_*\cM \otimes \hat{L}^{\vee} \otimes \alpha) = 0$ (the case $P \cong \cO_A$ will conclude the proof). By the projection formula, is it enough to show that $H^i(A, \cM \otimes F^{es, *}\hat{L}^{\vee} \otimes \alpha) = 0$ for all $i > 0$ and all $\alpha \in \Pic^0(A)$. By cohomology and base change, this amounts to showing that the complex \[ R\hat{S}(\cM \otimes F^{es, *}(\hat{L}^{\vee}))\] is in fact a sheaf in degree zero. This is equivalent to showing that \[ \hat{\varphi}_{L, *}R\hat{S}(\cM \otimes F^{es, *}(\hat{L}^{\vee})) \] is a sheaf in degree zero. However, we have an isomorphism \[ \hat{\varphi}_{L, *}R\hat{S}(\cM \otimes F^{es, *}(\hat{L}^{\vee})) \cong R\hat{S}(\phi_L^*(\cM \otimes F^{es, *}(\hat{L}^{\vee}))), \] by \cite[3.4]{Mukai_Fourier_Mukai_transform}, so again by cohomology and base change, we have to show that \[ H^i(A, \phi_L^*(\cM \otimes F^{es, *}\hat{L}^{\vee}) \otimes \alpha)  = 0 \] for all $i > 0$ and $\alpha \in \Pic^0(A)$ (note that one given $e > 0$ has to give this vanishing for all $\alpha \in \Pic^0(A)$ at the same time). Since 
	\begin{align*} 
		\phi_L^*(\cM \otimes F^{es, *}\hat{L}^{\vee}) \otimes \alpha & \cong \phi_L^*\cM \otimes F^{es, *}\phi_L^*\hat{L}^{\vee} \otimes \alpha \\
		& \expl{\cong}{\autoref{further_props_FM}.\autoref{itm:phi_oullback_easy}} \left(\phi_L^*\cM \otimes F^{es, *}L \otimes \alpha \right)^{\oplus h^0(\bighat{A}, L)}  \\
		& \cong \left(\phi_L^*\cM \otimes L^{\otimes p^{es}} \otimes \alpha \right)^{\oplus h^0(\bighat{A}, L)},
	\end{align*}  
	we conclude the proof by Fujita vanishing (\cite{Fujita_Vanishing_theorem_for_semipositive_line_bundles}).
\end{proof}

\begin{proof}[Proof of \autoref{GV_Hacon_Pat}]
	This follows from \autoref{Cartier_mod_gives_GV_system} and \autoref{prop_star_implies_GV_system}.
\end{proof}

\subsection{Equivalence between Cartier crystals and $V$--crystals on dual abelian varieties}\label{sec:equiv_cat}

Let us deduce formally from \autoref{GV_Hacon_Pat} the following result, which implicitly contains the statements of \cite[Theorem 5.2 and Corollary 5.3]{Hacon_Pat_GV_Geom_Theta_Divs}.

\begin{thm}\label{equivalence_V_crystals_and_Cartier_crystals}
	The functors \[ \begin{tikzcd}[row sep=0.3em]
		(\Coh_A^{C^s})^{op} \arrow[rr] &  & \Coh_{\bighat{A}}^{V^s} \\
		\cM \arrow[rr, maps to]        &  & \cH^0(\FM_A(\cM))      
	\end{tikzcd} \] and 
	\[ \begin{tikzcd}[row sep=0.3em]
		\Coh_{\bighat{A}}^{V^s} \arrow[rr] &  & (\Coh_A^{C^s})^{op} \\
		\cN \arrow[rr, maps to]        &  & \cH^0(\FM_A(\cN))      
	\end{tikzcd} \] factor through functors on crystals $(\Crys_A^{C^s})^{op} \to \Crys_{\bighat{A}}^{V^s}$ and $\Crys_{\bighat{A}}^{V^s} \to (\Crys_A^{C^s})^{op}$, and they become inverses of one another. In particular, we have an equivalence of categories \[ (\Crys_A^{C^s})^{op} \cong \Crys_{\bighat{A}}^{V^s}. \]
\end{thm}

The proof will require a few intermediate results. The main intermediate step is to show that the analogue of \autoref{GV_Hacon_Pat} holds for $V$--modules, see \autoref{vanishing_for_V_modules} (this will be a formal consequence of \autoref{GV_Hacon_Pat} and properties of the Fourier--Mukai transform). 

Recall the definitions of the truncation functors: given a complex $\cA^{\bullet}$ and $i \in \bZ$, we define its truncations \[ (\tau_{\leq i}\cA^{\bullet})^p \coloneqq \begin{cases}
	\cA^p & \mbox{ if } p < i; \\
	\ker(d^i) & \mbox{ if } p = i; \\
	0 & \mbox{ if }p > i,
\end{cases} \] and \[ (\tau_{\geq i}\cA^{\bullet})^p \coloneqq \begin{cases}
	0 & \mbox{ if } p < i; \\
	\coker(d^{i - 1}) & \mbox{ if } p = i; \\
	\cA^p & \mbox{ if }p > i.
\end{cases} \]
By \stacksproj{08J5}, we then have exact triangles \[ \begin{tikzcd}
	\tau_{\leq i}\cA^{\bullet} \arrow[rr] &  & \cA^{\bullet} \arrow[rr] &  & \tau_{\geq i + 1}\cA^{\bullet} \arrow[rr, "{+1}"] &  & {}
\end{tikzcd} \]

We will repeatedly use the following standard fact about derived categories: given a diagram \[ \begin{tikzcd}
	&  & \cF^{\bullet} \arrow[d, "f"]  &  &                        &  &    \\
	\cA^{\bullet} \arrow[rr, "a"] &  & \cB^{\bullet} \arrow[rr, "b"] &  & \cC^{\bullet} \arrow[rr, "{+1}"] &  & {}
\end{tikzcd} \] in a derived category where the bottom arrows form an exact triangle, we have that if $bf = 0$, then $f$ factors through $a$.

\begin{lemma}\label{extension_of_derived_nilp_is_nilp}
	Consider a commutative diagram of exact triangles in $D^b_{\coh}(A)$:
	
	\[ \begin{tikzcd}
		F^s_*\cM_1^{\bullet} \arrow[rr, "F^s_*a"] \arrow[d, "\theta_1"] &  & F^s_*\cM_2^{\bullet} \arrow[rr, "F^s_*b"] \arrow[d, "\theta_2"] &  & F^s_*\cM_3^{\bullet} \arrow[rr, "{+1}"] \arrow[d, "\theta_3"] &  & {} \\
		\cM_1^{\bullet} \arrow[rr, "a"]                                 &  & \cM_2^{\bullet} \arrow[rr, "b"]                                 &  & \cM_3^{\bullet} \arrow[rr, "{+1}"]                            &  & {}
	\end{tikzcd} \]
	If $\theta_1$ and $\theta_3$ are nilpotent, then so is $\theta_2$.
\end{lemma}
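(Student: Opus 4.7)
The plan is to show that if $\theta_1^e = 0$ and $\theta_3^f = 0$, then $\theta_2^{e+f} = 0$, by combining the commutativity of the squares with the factorization property of exact triangles that was emphasized just before the statement.

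First I would compose $\theta_2^f$ with $b$. Using the commutativity of the right square iterated $f$ times, we get $b \circ \theta_2^f = \theta_3^f \circ F^{fs}_* b$, and the right-hand side vanishes because $\theta_3^f = 0$. Hence $b \circ \theta_2^f = 0$, and by the standard property of exact triangles recalled just above the lemma (applied to the bottom triangle), the map $\theta_2^f \colon F^{fs}_* \cM_2^{\bullet} \to \cM_2^{\bullet}$ factors through $a$, i.e.\ there exists $g \colon F^{fs}_* \cM_2^{\bullet} \to \cM_1^{\bullet}$ in $D^b_{\coh}(A)$ with $\theta_2^f = a \circ g$.

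Next I would compute $\theta_2^{e+f} = \theta_2^e \circ F^{es}_*(\theta_2^f) = \theta_2^e \circ F^{es}_* a \circ F^{es}_* g$. The left square, iterated $e$ times, gives $\theta_2^e \circ F^{es}_* a = a \circ \theta_1^e$, and since $\theta_1^e = 0$ by hypothesis, we conclude $\theta_2^{e+f} = a \circ \theta_1^e \circ F^{es}_* g = 0$, as wanted.

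The only subtle point is the iterated commutativity identities $\theta_2^n \circ F^{ns}_* a = a \circ \theta_1^n$ and $b \circ \theta_2^n = \theta_3^n \circ F^{ns}_* b$, which follow by an immediate induction on $n$ using the given commuting squares and the functoriality of $F^s_*$. I do not expect a genuine obstacle; the real content is just packaging together the factorization through $a$ and the compatibility of the Cartier-type structural morphisms with $a$ and $b$.
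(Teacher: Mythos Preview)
Your proof is correct and essentially identical to the paper's: the paper takes a single $e$ with $\theta_1^e = \theta_3^e = 0$, factors $\theta_2^e$ through $a$ via some $c$ using $b\theta_2^e = \theta_3^e F^{es}_*b = 0$, and concludes $\theta_2^{2e} = a\theta_1^e F^{es}_*c = 0$. Your use of distinct exponents $e,f$ and the explicit remark on the iterated square identities are minor cosmetic refinements of the same argument.
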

\begin{proof}
	Let $e > 0$ be such that $\theta_1^e = 0 =\theta_3^e$. Then $0 = \theta_3^eF^{es}_*b = b\theta_2^e$, so $\theta_2^e$ factors through $\cM_1^{\bullet}$, say via a map $c$. But then, $\theta_2^{2e} = a\theta_1^eF^e_*c = 0$.
\end{proof}

\begin{cor}\label{nilp_at_each_hi_is_nilp}
	Let $\cM^{\bullet} \in D^b_{\coh}(A)$, and let $\theta \colon F^s_*\cM^{\bullet} \to \cM^{\bullet}$ be a morphism. Assume that for all $i \in \bZ$, the Cartier module $\cH^i(\cM^{\bullet})$ is nilpotent. Then for some $e > 0$, the composition $\theta^e \colon F^{es}_*\cM^{\bullet} \to \cM^{\bullet}$ is zero.
\end{cor} 
\begin{rem}\label{nilp_at_each_hi_is_nilp_V_module}
	\begin{itemize}
		\item The same statement holds in the context of $V$--modules, and the proof is identical.
		\item One has to be careful when working with morphisms in the derived category. It is not true in general that if $f \colon \cF^{\bullet} \to \cG^{\bullet}$ is a morphism in a derived category such that $\cH^i(f) = 0$ for all $i \in \bZ$, then $f = 0$. For example, if $X$ is any smooth proper variety of dimension $d > 0$, then \[ 0 \neq H^d(X, \omega_X) = \Ext^d(\cO_X, \omega_X) \expl{=}{\cite[Theorem 10.7.4]{Weibel_Intro_to_homological_algebra}} \Hom_{D(\cO_X)}(\cO_X, \omega_X[d]), \] but any morphism $f \colon \cO_X \to \omega_X[d]$ must satisfy that $\cH^i(f) = 0$ for all $i \in \bZ$.
	\end{itemize}

\end{rem}
\begin{proof}
	If $\cM^{\bullet} = 0$, this is immediate. If not, let $i \in \bZ$ be the minimal integer such that $\cH^i(\cM^{\bullet}) \neq 0$. Then we have an exact triangle \[ \begin{tikzcd}
		\cH^i(\cM^{\bullet})[-i] \arrow[rr] &  & \cM^{\bullet} \arrow[rr] &  & \tau_{\geq i + 1}\cM^{\bullet} \arrow[rr, "+1"] &  & {}
	\end{tikzcd} \] so by \autoref{extension_of_derived_nilp_is_nilp} it is enough to show that $\tau_{\geq i + 1}\cM^{\bullet}$ is nilpotent. We then can keep going inductively and deduce the result.
\end{proof}

\begin{prop}[{\cite[Theorem 5.2]{Hacon_Pat_GV_Geom_Theta_Divs}}]\label{vanishing_for_V_modules}
	Let $\cN$ be a $V$--module. Then for all $i \neq 0$, the Cartier module $\cH^i\FM_{\bighat{A}}(\cN)$ is nilpotent.
\end{prop}
\begin{proof}
	Let $\cM^{\bullet} \coloneqq \FM_{\bighat{A}}(\cN)$ (this complex is supported in degrees $\leq 0$ by \autoref{properties_Fourier_Mukai_transform}.\autoref{itm:support}). We will show that the morphism \[ F^{s}_*\tau_{\leq -1}\cM^{\bullet} \longrightarrow \tau_{\leq -1}\cM^{\bullet} \] is nilpotent (this will conclude the proof by applying cohomology sheaves). There is a commutative diagram of exact triangles \[ \begin{tikzcd}
		F^{s}_*(\tau_{\leq -1}\cM^{\bullet}) \arrow[d] \arrow[rr] &  & F^{s}_*\cM^{\bullet} \arrow[d] \arrow[rr] &  & F^{s}_*\cH^0(\cM^{\bullet}) \arrow[d] \arrow[rr, "+1"] &  & {} \\
		\tau_{\leq -1}\cM^{\bullet} \arrow[rr]                     &  & \cM^{\bullet} \arrow[rr]                   &  & \cH^0(\cM^{\bullet}) \arrow[rr, "+1"]                   &  & {}
	\end{tikzcd} \]
	Applying $\FM_A$ and using points \autoref{itm:equiv_cat} and \autoref{itm:behaviour_pushforwards_and_pullbacks} of \autoref{properties_Fourier_Mukai_transform} gives an other commutative diagram of exact triangles \begin{equation}\label{diag:proof_easy_version} \begin{tikzcd}
		\FM_A(\cH^0(\cM^{\bullet})) \arrow[r] \arrow[d]  &  \cN \arrow[r] \arrow[d]   & \FM_A(\tau_{\leq- 1}\cM^{\bullet}) \arrow[r, "+1"] \arrow[d]   & {} \\
		{V^{s, *}\FM_A(\cH^0(\cM^{\bullet}))} \arrow[r]  & {V^{s, *}\cN} \arrow[r]  & {V^{s, *}\FM_A(\tau_{\leq- 1}\cM^{\bullet})} \arrow[r, "+1"]  & {}
	\end{tikzcd} \end{equation}
%
	
	\noindent Note that since $\cN$ is supported in degree zero and $\FM_A(\cH^0(\cM^{\bullet}))$ is supported in degrees $\leq 0$ by \autoref{properties_Fourier_Mukai_transform}.\autoref{itm:support}), we deduce that \[ \cH^j(\FM_A(\tau_{\leq- 1}\cM^{\bullet}))) = 0 \] for all $j \geq 1$ by the induced long exact sequence in cohomology.
	
	Consider the hypercohomology spectral sequence of $V$--modules induced by \autoref{diag:proof_easy_version}:  \[ \cH^a\FM_A(\cH^b(\tau_{\leq -1}(\cM^{\bullet}))) \implies \cH^{a - b}\FM_A(\tau_{\leq -1}(\cM^{\bullet})). \] By \autoref{GV_Hacon_Pat}, this spectral sequence degenerates at the level of $V$--\emph{crystals} and gives us that \[ \cH^0(\FM_A(\cH^b(\tau_{\leq -1}(\cM^{\bullet}))) \sim_V \cH^{-b}\FM_A(\tau_{\leq -1}(\cM^{\bullet})) \] for all $b \in \bZ$. If $b \geq 0$, then the module on the left is zero and if $b \leq -1$, then the module on the right is zero by the previous paragraph.
	
	We then have proven that for all $b \in \bZ$, the $V$--module $\cH^0(\FM_A(\cH^b(\tau_{\leq -1}(\cM^{\bullet}))))$ is nilpotent. Since this is also the case for each $\cH^i(\FM_A(\cH^b(\tau_{\leq -1}(\cM^{\bullet}))))$ with $i \neq 0$ by \autoref{GV_Hacon_Pat}, we deduce by \autoref{nilp_at_each_hi_is_nilp_V_module} that the map $\FM_A(\cH^b(\cM^{\bullet})) \to V^{s, *}\FM_A(\cH^b(\cM^{\bullet}))$ is nilpotent. By \autoref{properties_Fourier_Mukai_transform}.\autoref{itm:equiv_cat}, this is equivalent to the fact that each Cartier module $\cH^b(\tau_{\leq -1}(\cM^{\bullet}))$ is nilpotent, so the morphism \[ F^s_*\tau_{\leq -1}(\cM^{\bullet}) \to \tau_{\leq -1}(\cM^{\bullet}) \] is also nilpotent by \autoref{nilp_at_each_hi_is_nilp}.
\end{proof}

We now have all the tools to prove that Cartier crystals and $V$--crystals are equivalent.

%
%

\begin{proof}[Proof of \autoref{equivalence_V_crystals_and_Cartier_crystals}]
	By \autoref{properties_Fourier_Mukai_transform}.\autoref{itm:behaviour_pushforwards_and_pullbacks}, the functors $\cH^0\FM$ send Cartier modules to $V$--modules and vice--versa, and it follows from the definitions that they preserve nilpotence. Let us first show that they induce functors at the level of crystals, so consider the composition \[ \begin{tikzcd}
		(\Coh_A^{C^s})^{op} \arrow[rr, "\cH^0\FM_A"] &  & \Coh_{\bighat{A}}^{V^s} \arrow[rr] &  & \Crys_{\bighat{A}}^{V^s}.
	\end{tikzcd} \]
	In order to be able to apply \stacksproj{02MS} and factor the composition above through $(\Crys_A^{C^s})^{op}$, it is then enough to show that it is exact. Hence, let \[ \begin{tikzcd}
		0 \arrow[rr] &  & \cM_1 \arrow[rr] &  & \cM_2 \arrow[rr] &  & \cM_3 \arrow[rr] &  & 0
	\end{tikzcd} \] be an exact sequence of Cartier modules.  Applying $\FM_A$ gives an exact triangle 
	\[ \begin{tikzcd}
		\FM_A(\cM_3) \arrow[rr] &  & \FM_A(\cM_2) \arrow[rr] &  & \FM_A(\cM_1) \arrow[rr, "+1"] &  & {}
	\end{tikzcd} \] By \autoref{GV_Hacon_Pat} and the induced long exact sequence in cohomology sheaves, we deduce that the sequence  \[ \begin{tikzcd}
	0 \arrow[r]  & \cH^0(\FM_A(\cM_3)) \arrow[r]  & \cH^0(\FM_A(\cM_2)) \arrow[r]   & \cH^0(\FM_A(\cM_1)) \arrow[r]  & 0
\end{tikzcd} \] is exact up to nilpotence (i.e.	exact in $\Crys_{\bighat{A}}^{V^s}$). We can then apply \stacksproj{02MS} and deduce the existence of the factorization 	\[ \begin{tikzcd}
	(\Coh_A^{C^s})^{op} \arrow[rr, "\cH^0\FM_A"] \arrow[d] &  & \Coh_{\bighat{A}}^{V^s} \arrow[d] \\
	(\Crys_A^{C^s})^{op} \arrow[rr]                        &  & \Crys_{\bighat{A}}^{V^s}.        
	\end{tikzcd} \] The proof in the other direction is identical, replacing the use of \autoref{GV_Hacon_Pat} by that of \autoref{vanishing_for_V_modules}. \\
	
	Now, let $\cN$ be a $V$--module, and let $\cM_0 \coloneqq \cH^0\FM_{\bighat{A}}(\cN)$. By \autoref{properties_Fourier_Mukai_transform}.\autoref{itm:support}, there is an exact triangle 
	\[ \begin{tikzcd}
		\tau_{\leq -1}\FM_{\bighat{A}}(\cN) \arrow[rr] &  & \FM_{\bighat{A}}(\cN) \arrow[rr] &  & \cM_0 \arrow[rr, "+1"] &  & {}.
	\end{tikzcd} \]

	By \autoref{vanishing_for_V_modules} and \autoref{extension_of_derived_nilp_is_nilp}, there exists $e > 0$ such that the morphism $F^{es}_*\tau_{\leq -1}\FM_{\bighat{A}}(\cN) \to \tau_{\leq -1}\FM_{\bighat{A}}(\cN)$ is zero. We can apply $\FM_A$ to the above triangle and use \autoref{properties_Fourier_Mukai_transform}.\autoref{itm:equiv_cat} to obtain \[ \begin{tikzcd}
		\FM_A(\cM_0) \arrow[rr] &  & \cN \arrow[rr] &  & \FM_A\left(\tau_{\leq -1}\FM_{\bighat{A}}(\cN)\right) \arrow[rr, "+1"] &  & {}
	\end{tikzcd} \]
	Set $\ttilde{\cN} \coloneqq \FM_A\left(\tau_{\leq -1}\FM_{\bighat{A}}(\cN)\right)$. Since the natural map $\ttilde{\cN} \to V^{es, *}\ttilde{\cN}$ is zero, we deduce that for all $j \in \bZ$, the $V$--module $\cH^j(\ttilde{\cN})$ is nilpotent. By the exact triangle above, we have a long exact sequence of $V$--modules \[ \begin{tikzcd}
		\dots \arrow[r] & \cH^{-1}(\ttilde{\cN}) \arrow[r] &  \cH^0\FM_A(\cM_0) \arrow[r] & \cN \arrow[r] & \cH^0(\ttilde{\cN}) \arrow[r] & \dots
	\end{tikzcd} \]
	Thus, the kernel and cokernel of $\cH^0\FM_A(\cM_0) \to \cM$ are nilpotent $V$--modules, so this morphism is by definition an isomorphism as $V$--crystals. In other words, we found a natural isomorphism \[ (\cH^0\FM_A \circ \: \cH^0\FM_{\bighat{A}})(\cN) \to \cN \] of $V$--crystals, which is exactly what we needed to show. 
	
	The proof that there is a natural isomorphism $(\cH^0\FM_{\bighat{A}} \circ \: \cH^0\FM_A)(\cM) \to \cM$ on Cartier modules $\cM$ is identical.
\end{proof}

\subsection{Cohomological support loci}

Classically, generic vanishing came as the study of some cohomology vanishing loci. We will now move towards this direction. The main purposes for this section are the following:

\begin{enumerate}
	\item Slightly improve \cite[Theorem 1.1]{Hacon_Pat_GV_Geom_Theta_Divs}. As a new application of this result, we proved in \cite{Baudin_Duality_between_perverse_sheaves_and_Cartier_crystals} a generic vanishing results about perverse $\bF_p$--sheaves on abelian varieties.
	\item Simplify the proofs, using a more commutative algebraic point of view.
	\item Give examples to illustrate the theory.
\end{enumerate}

\noindent In characteristic zero, the main actors are the following closed subsets:

\begin{defn}
	Let $\cM$ be a coherent sheaf on $A$. We set \[ V^i(\cM) = \set{\alpha \in \bighat{A}}{H^i(A, \cM \otimes \cP_\alpha) \neq 0} \inc \bighat{A}. \]
\end{defn}

Recall that if $\cM$ is a GV--sheaf as in \autoref{def:GV_sheaf}, then its cohomology support loci $V^i(\cM)$ are of codimension at least $i$. We want to find an analoguous statement in the context of Cartier modules, so we need a definition of ``cohomology support loci'' in this setup. The definition that arose in \cite{Hacon_Pat_GV_Geom_Theta_Divs} is the following:

\begin{defn}
	Given a Cartier module $\cM$ on $A$, we set \[ W^i_F(\cM)  = \set{\alpha \in \bighat{A}}{\varprojlim H^i(A, F^{es}_*\cM \otimes \cP_\alpha) \neq 0} \subseteq \bighat{A}. \]
\end{defn}

Note that by the projection formula, \[ (p^s)^{-1}W^i_F(\cM) = W^i_F(\cM).\] 

\begin{rem}
	Whenever the context is clear, we will only write $V^i$ and $W^i_F$.
\end{rem}

\begin{example}
	Consider the Cartier module $F^s_*\omega_A \to \omega_A$ (see \autoref{ex:Cartier_operator}). Then note that for any non--trivial $L \in \Pic^0(A)$, we have $H^g(A, \omega_A \otimes L) = 0$ by Serre duality. Thus, if $L$ is not $p^e$--torsion for some $e$, then $L \notin W^g_F$.
	
	On the other hand, the morphism $H^g(A, F_*\omega_A) \to H^g(A, \omega_A)$ is Serre dual to the map taking $p^s$--powers $H^0(A, \cO_A) \to H^0(A, F^s_*\cO_A)$, which is always an isomorphism. Thus, $0 \in W^g_F$, so we conclude that \[ W^g_F = \set{x \in \bighat{A}}{x \mbox{ is $p^{es}$--torsion for some $e > 0$}} = \bigcup_{e > 0} (p^{es})^{-1}\{0_{\bighat{A}}\}.\]
\end{example}
	
	Even though the example above is rather basic, an important observation should be made: the set $W^g_F$ is \emph{never} closed if $A$ has positive $p$--rank, and it is even dense if $A$ is ordinary. Therefore it is a priori unclear what it means to ``bound its codimension'' as for the sets $V^i(\cM)$. In general, the idea will be to find a closed subset $W^i$ that ``approximates'' the locus $W^i_F$, in the sense that $W^i_F$ should be close to $\bigcup_e(p^{es})^{-1}W^i$. 
	
	For this specific example, we would have $W^g = \{0_{\bighat{A}}\}$. Unfortunately, we cannot always have \[ \bigcup_{e > 0}\: (p^{es})^{-1}W^i = W^i_F, \] in general (see \autoref{first_example}, \autoref{example_euler_char}), but the difference can be somewhat ``bounded'' by some smaller--dimensional closed subset $Z^i$. This is all encompassed in the following result:
	
\begin{thm}\label{generic vanishing}
	Let $\cM$ be a Cartier module on $A$. Then for all $0 \leq i \leq g$, there exist closed subsets $W^i$ and $Z^i$ of $\bighat{A}$ with the following properties:
	\begin{enumerate}[topsep=1ex, itemsep=1ex]
		\item\label{itm:gv_inclusions} $W^{i + 1} \inc W^i$, $Z^{i + 1} \inc Z^i$ and $Z^i \inc W^i$ for all $i$;
		\item\label{itm:gv_codim} $\codim W^i \geq i$, $\codim{Z^i} \geq i + 1$;
		\item\label{itm:gv_p_closed} $p^s(W^i) = W^i$;
		\item\label{itm:gv_link_with_section}
		we have \[ W^0 \inc V^0 = \set{\alpha \in \bighat{A}}{H^0(A, \cM \otimes \cP_{\alpha}) \neq 0}; \]
		\item\label{van_and_non_van} there are inclusions  \[\left(\bigcup_{e \geq 0}(p^{es})^{-1}W^i\right) \setminus \ttilde{Z^i} \: \inc \: W^i_F \: \inc \: \bigcup_{e \geq 0}(p^{es})^{-1}W^i,\] where \[ \ttilde{Z^i} \coloneqq \set{\alpha \in \bighat{A}}{p^{es}(\alpha) \in Z^i \mbox{ for infinitely many }e}. \]
		In particular, if $W^i$ has a component of codimension $i$, then very general points of $\bigcup_e(p^{es})^{-1}W^i$ lie in $W^i_F$.
	\end{enumerate}
\end{thm}

\begin{rem}
	The proof explains us how to construct these closed subsets $W^i$'s and $Z^i$'s explicitly, see \autoref{notation_V_inj_and_W_and_so_on}. Although the explicit construction is cumbersome, it turns out that understanding $W^0$ and the codimension $i$ components of the subsets $W^i$ is more doable. We will come back to this in \autoref{properties B^i}.
\end{rem}

The condition that $p^s(W^i) = W^i$ is the positive characteristic analogue of the fact that, in characteristic zero, the cohomological support loci are a finite union of torsion translates of abelian subvarieties. This analogy holds thanks to the following result:
\begin{thm}[{\cite{Pink_Roessler_Manin_Mumford_conjecture}}]\label{main thm Pink/Roessler}
	Assume that $A$ has no supersingular factor, and let $W \inc A$ be an irreducible closed subset such that $p^s(W) = W$. Then there exists $e > 0$, a point $a \in A$ of $(p^{es} - 1)$--torsion and an abelian subvariety $B \inc A$ such that \[ W = a + B. \]
\end{thm}
\begin{proof}
	By \cite[Theorem 3.1 and Remark 3.2]{Pink_Roessler_Manin_Mumford_conjecture} (see also \cite[Paragraph below Theorem 2.6]{Pink_Roessler_Conjecture_of_Beauville_and_Catanese_revisited}), we know that we can write $W = a' + B$ for some $a' \in A$ and some abelian variety $B$, so we are left to potentially change $a'$ to obtain the torsion property. By \cite[Proposition 6.1]{Pink_Roessler_Manin_Mumford_conjecture}, there exist $e > 0$ and a point $a \in W$ such that $p^{es}(w) = w$ (i.e. $w$ is $(p^{es} - 1)$--torsion). Indeed, with their terminology, $A$ is positive since the map $p^{es}$ is never an isomorphism on a positive--dimensional abelian variety). Since $B$ stabilizes $W$, we deduce that $a + B \inc W$. We then deduce by dimensional reasons that $a + B = W$, so the proof is complete.
\end{proof}

\begin{rem}
	\begin{itemize}
		\item In fact, \autoref{main thm Pink/Roessler} has a version that holds in greater generality, see \cite[Theorem 3.1]{Pink_Roessler_Manin_Mumford_conjecture}.
		\item The sets $W^i$ of \autoref{generic vanishing} may not be irreducible, but the closed subset in \autoref{main thm Pink/Roessler} is required to be irreducible. Nevertheless, if $W^i = X_1 \cup \dots \cup X_r$ denotes its irreducible decomposition, then for some $e > 0$, we also have $p^{es}(X_i) = X_i$ for all $i$. Thus, we can apply \autoref{main thm Pink/Roessler} on each $X_i$. 
		
		In particular, each closed subset $W^i$ in \autoref{generic vanishing} is a finite union of torsion translates of abelian subvarieties whenever $A$ has no supersingular factor.
	\end{itemize}
\end{rem}


%
%

We start by showing that $W^i_F$ can be computed in terms of $V$--modules (this can be very useful for actual computations).
\begin{lem}\label{SCSL for H^i = SCSL for Tor^i}
	Let $\cM$ be a Cartier module on $A$, and let $\cN \coloneqq \cH^0\FM_A(\cN)$ be the corresponding $V$--module.
	Then for all $i \geq 0$ and $\alpha \in \bighat{A}$, \[ \varprojlim H^i(A, F^{es}_*\cM \otimes \cP_{\alpha})  \cong \left(\colim \Tor_i(V^{es, *}\cN, k(- \alpha))\right)^{\vee}. \]
\end{lem}

\begin{proof}
	For all $e > 0$, we have a natural isomorphisms
	
	\[ H^i(A, F^{es}_*\cM \otimes \cP_\alpha) \expl{\cong}{\autoref{properties_Fourier_Mukai_transform}.\autoref{itm:more_gen_version}} \Tor_i(\FM_A(F^{es}_*\cM), k(-\alpha))^{\vee} \expl{\cong}{\autoref{properties_Fourier_Mukai_transform}.\autoref{itm:behaviour_pushforwards_and_pullbacks}} \Tor_i(V^{es, *}\FM_A(\cM), k(- \alpha))^{\vee}.\]

	Taking inverse limits gives \[ \lim H^i(A, F^{es}_*\cM \otimes \cP_\alpha^{\vee}) \cong \lim \Tor_i(V^{es, *}\FM_A(\cM), k(\alpha))^{\vee} \cong \left(\colim \Tor_i(V^{es, *}\FM_A(\cM), k(\alpha))\right)^\vee. \]
	
	Since $\cH^i\FM_A(\cM)$ is nilpotent for all $i \neq 0$ (\autoref{GV_Hacon_Pat}) and $\cN = \cH^0\FM_A(\cM)$, we have \[ \colim \Tor_i(V^{es, *}\FM_A(\cM), k(\alpha)) \cong \colim \Tor_i(V^{es, *}\cN, k(\alpha)), \] concluding the proof.
\end{proof}

As a consequence, the cohomological support loci can be completely understood in terms of commutative algebraic data of a module on $\bighat{A}$ (together with the $V$--module action). This is the key to prove \autoref{generic vanishing}. Let us state the following natural definition.

\begin{defn}
	Let $\cN$ be a $V$--module on $\bighat{A}$, and let $i \geq 0$. Define \[ W^i_V(\cN) \coloneqq \set{\alpha \in \bighat{A}}{\varinjlim \Tor_i(V^{es, *}\cN, k(-\alpha)) \neq 0} \inc \bighat{A}. \]
\end{defn}

 An immediate consequence of \autoref{SCSL for H^i = SCSL for Tor^i} is then the following:

\begin{cor}\label{equality_of_SCSL}
	Let $\cM$ be a Cartier module on $A$, and let $\cN \coloneqq \cH^0\FM_A(\cM)$ be the associated $V$--module. Then for all $i \geq 0$, we have \[ W^i_F(\cM) = W^i_V(\cN). \]
\end{cor}

The following construction will be useful to obtain the inclusions $p^s(W^i) \inc W^i$ in \autoref{generic vanishing}.

\begin{notation}\label{notation_lambda}
	For a subset $Z \inc \bighat{A}$, define \[ \Lambda(Z) \coloneqq \set{\alpha \in \bighat{A}}{p^{es}(\alpha) \in Z \esp \forall e \gg 0}. \]
\end{notation}

\begin{rem}\label{rem_easy_lambda}
	Note that if $p^s(Z)\inc Z$, then \[ \Lambda(Z) = \bigcup_{e \geq 0}(p^{es})^{-1}Z. \]
\end{rem}

\begin{lem}\label{p-dynamic closure of a subset}
	Let $Z \inc \bighat{A}$ be a closed subset. Then there exists a canonically induced closed subset $W_Z \inc Z$ such that $p^s(W_Z) = W_Z$ and \[ \Lambda(Z) = \Lambda(W_Z). \]
	
	This construction satisfies the following properties:
	\begin{enumerate}
		\item\label{explicit_1} if $p^s(Z) = Z$, then $W_Z = Z$;
		\item\label{explicit_2} more generally, if $p^s(Z) \inc Z$, then $W_Z = p^{es}(Z)$ for all $e \gg 0$;
		\item\label{inclusion} if $Z' \inc Z$ is an inclusion of closed subsets, then $W_{Z'} \inc W_Z$.
	\end{enumerate}
\end{lem}

\begin{proof}
	We will first construct a canonically induced closed subset $\overline{Y_Z}$ satisfying $p^s(\overline{Y_Z}) \inc \overline{Y_Z}$, and then construct $W_Z$ out of it.
	
	For all $\alpha \in \Lambda(Z)$, let $e(\alpha) \geq 0$ be the minimal integer such that $p^{es}(\alpha) \in Z$ for all $e \geq e(\alpha)$, and consider \[ Y_Z \coloneqq \set{p^{es}(\alpha)}{\alpha \in \Lambda(Z), \: e \geq e(\alpha)}.\] 
	Then we immediately have:
	\begin{itemize}
		\item  $p^s(Y_Z) \inc Y_Z$;
		\item  $Y_Z \inc Z$;
		\item $\Lambda(Y_Z) = \Lambda(Z)$.
	\end{itemize}
	Since $Z$ is closed, we then obtain that $\overline{Y_Z} \inc Z$, so $\Lambda(Y_Z) \inc \Lambda(\overline{Y}) \inc \Lambda(Z)$. Since $\Lambda(Y_Z) = W(Z)$, we deduce that $W(Y_Z) = W(\overline{Y_Z})$. Furthermore, we also have that $p^s(\overline{Y_Z}) \inc \overline{Y_Z}$, since $p^s(Y_Z) \inc Y_Z$. 
	
	Let us now construct $W_Z$. Consider the inclusions \[ \overline{Y_Z} \cni p^s(\overline{Y_Z}) \cni p^{2s}(\overline{Y_Z}) \cni \dots. \] This descending chain of closed subsets eventually stabilizes by Noetherianity; let us define $W_Z \coloneqq p^{es}(\overline{Y_Z})$ with $e \gg 0$. It is then immediate that $\Lambda(Z) = \Lambda(X_Z)$ and $p^s(W_Z) = W_Z$. 
	
	The three statements \autoref{explicit_1}, \autoref{explicit_2} and \autoref{inclusion} follow directly from our construction.
\end{proof}

Recall that given a subset $S \inc A$, we denote by $-S$ the image of $S$ under the inversion involution on $A$. We now have all the ingredients to prove \autoref{generic vanishing}.

\begin{proof}[Proof of \autoref{generic vanishing}]
	By \autoref{SCSL for H^i = SCSL for Tor^i}, letting $\cN = \cH^0\FM_A(\cM)$, recall that $W^i_F(\cM) = W^i_V(\cN)$. Let $\cN_{\injj}$ be the canonical injective $V$--module nil--isomorphic to $\cN$, i.e. $\cN_{\injj}$ is the image of the map $\cN \to \colim V^{es, *}\cN$ (see \autoref{canonical_injective_V_module}). Since the natural map $\colim V^{es, *}\cN \to \colim V^{es, *}\cN_{\injj}$ is an isomorphism by construction and $\varinjlim$ commutes with $\Tor_i$, we have \[ W^i_V(\cN) = W^i_V(\cN_{\injj}). \]
	We start with a first approximation \[ V^i_{\injj} := \set{\alpha \in \bighat{A}}{\Tor_i(\cN_{\injj}, k(-\alpha)) \neq 0} \] of $W^i_F(\cM)$. Let us list some properties of the subsets $V^i_{\injj}$:
	
	\begin{enumerate}
		\item $V^i_{\injj}$ is closed by \autoref{locus of non-zero Tor is closed}.
		\item $\codim V^i_{\injj} \geq i$. Indeed, for any scheme--theoretic point $\beta \in \bighat{A}$ of codimension strictly smaller than $i$ and any module $\cT$, we have \[ \Tor_i(\cT, k(\beta)) \cong \Tor_i^{\cO_{\bighat{A}, \beta}}(\cT_{\beta}, k(\beta)) = 0 \] since $\cO_{\bighat{A}, \beta}$ is regular of dimension less than $i$;
		\item $V^g_{\injj} \inc \dots \inc V^0_{\injj}$ by \autoref{meaning of vanishing of Tor}.
	\end{enumerate}
	Note that since $\cN \surj \cN_{\injj}$, we have that \[  V^0_{\injj} \expl{=}{Nakayama's lemma} -\Supp(\cN_{\injj}) \inc -\Supp(\cN) \expl{=}{\autoref{properties_Fourier_Mukai_transform}.\autoref{itm:support_H0}} V^0. \] 
	As the crucial property $p^s(V^i_{\injj}) = V^i_{\injj}$ is missing, we will now construct a closed subset $W^i \inc V^i_{\injj}$ such that $p^s(W^i) = W^i$. Note that given $\alpha \in \bighat{A}$, in order to have \[ \varinjlim \Tor_i(V^{es, *}\cN_{\injj}, k(-\alpha)) \neq 0, \] we must in particular have that $\Tor_i(V^{es, *}\cN_{\injj}, k(-\alpha)) \neq 0$ for all $e \gg 0$. By \autoref{Tor(V*) = 0 at x iff Tor = 0 at p(x)}, this is equivalent to the fact that $p^{es}(\alpha) \in V^i_{\injj}$ for all $e \gg 0$. In other words, \[ W^i_V \inc \Lambda(V^i_{\injj}) = \set{\alpha \in \bighat{A}}{p^{es}(\alpha) \in V^i_{\injj} \esp \forall e \gg 0}. \]
	Therefore we can apply \autoref{p-dynamic closure of a subset} to deduce the existence of closed subsets $W^i \coloneqq W_{V^i_{\injj}} \subseteq V^i_{\injj}$ such that
	
	\begin{itemize}
		\item $p^s(W^i) = W^i$;
		\item $W^g \inc \dots \inc W^0$ (see \autoref{p-dynamic closure of a subset}.\autoref{inclusion})
		\item $\codim W^i \geq i$;
		\item $W^i_V \inc \Lambda(V^i_{\injj}) =  \Lambda(W^i) = \bigcup_{e \geq 0}(p^{es})^{-1}W^i$ (see \autoref{rem_easy_lambda} for the last equality).
	\end{itemize}
	
	We have therefore proven \autoref{itm:gv_inclusions} for $W^i$, \autoref{itm:gv_codim} for $W^i$, \autoref{itm:gv_p_closed}, \autoref{itm:gv_link_with_section} and the right inclusion of \autoref{van_and_non_van}. It is therefore time to construct the closed subsets $Z^i$. Let $\cC$ denote the cokernel of $\cN_{\injj} \inj V^{s, *}\cN_{\injj}$. Note that if $\Tor_{i + 1}(\cC, k(-\alpha)) = 0$, then we have an injection \[ \Tor_i(\cN_{\injj}, k(-\alpha)) \inj \Tor_i(V^{s, *}\cN_{\injj}, k(-\alpha)) \] by the long exact sequence of Tor--groups. Note also that the cokernel of $V^{es, *}\cN_{\injj} \ra V^{(e + 1)s, *}\cN_{\injj}$ is simply $V^{es, *}\cC$. We then define \[ Z^i \coloneqq \set{\alpha \in \bighat{A}}{\Tor_{i + 1}(\cC, k(-\alpha)) \neq 0} \cap W^i. \] As before, $Z^i$ is closed of codimension at least $i + 1$, $Z^{i + 1} \subseteq Z^i$ and $Z^i \inc W^i$. \\
	
	Note that at this point, we proved everything except the left inclusion in point \autoref{van_and_non_van}. Let us do that now, by first showing that \[ \set{\alpha \in \bighat{A}}{p^{es}(x) \in W^i \setminus Z^i \mbox{ for } e \gg 0} \subseteq W^i_V. \]
	Let $\alpha \in \bighat{A}$ be such that $p^{es}(\alpha) \in W^i \setminus Z^i$ for any $e \gg 0$. Then \[ \Tor_i(V^{es, *}\cN_{\injj}, k(-\alpha)) \neq 0 \] and \[ \Tor_i(V^{es, *}\cN_{\injj}, k(-\alpha)) \inj \Tor_i(V^{(e + 1)s, *}\cN_{\injj}, k(-\alpha)) \] for $e \gg 0$ by our previous discussion, implying in particular that \[ \varinjlim \Tor_i(V^{es, *}\cN_{\injj}, k(-\alpha)) \neq 0. \] In other words, this gives that $\alpha \in W^i_V$, so we obtained our sought inclusion. Finally, given that $p^s(W^i) = W^i$, we have that \[ \set{\alpha \in \bighat{A}}{p^{es}(x) \in W^i \setminus Z^i \mbox{ for } e \gg 0} = \left(\bigcup_{e \geq 0}(p^{es})^{-1}W^i\right) \setminus \ttilde{Z^i}, \] where \[ \ttilde{Z^i} \coloneqq \set{\alpha \in \bighat{A}}{p^{es}(\alpha) \in Z^i \mbox{ for infinitely many }e}. \] The proof is complete.
\end{proof}

\begin{notation}\label{notation_V_inj_and_W_and_so_on}
	From now on, we will use the notations $V^i_{\injj}$ and $W^i$ as in the proof of \autoref{generic vanishing}. In other words, given a Cartier module $\cM$, we set \[ V^i_{\injj}(\cM) \coloneqq \set{\alpha \in \bighat{A}}{\Tor_i(\cN_{\injj}, k(-\alpha)) \neq 0} \] and \[ W^i(\cM) \coloneqq W_{V^i_{\injj}(\cM)}, \] (see \autoref{p-dynamic closure of a subset}), where $\cN_{\injj}$ is the canonical injective $V$--module associated to $\cM$ (i.e. $\cN_{\injj} = \cH^0(\FM_A(\cM))_{\injj}$, see \autoref{canonical_injective_V_module}). Also, if $\cC$ denotes the cokernel of $\cN_{\injj} \inj V^{s, *}\cN_{\injj}$, then \[ Z^i(\cM) \coloneqq \set{\alpha \in \bighat{A}}{\Tor_{i + 1}(\cC, k(-\alpha)) \neq 0}. \]
\end{notation}

\begin{rem}
	Note that the subsets of \cite[Theorem 1.1]{Hacon_Pat_GV_Geom_Theta_Divs} are precisely our subsets $V^i_{\injj}$. Hence, our $W^i$'s are more precise approximations of $W^i_F$. This is exactly the ``slight improvement'' we were referring to at the beginning of the section.
\end{rem}

In practice, working with the subsets $V^i_{\injj}$ can be more practical than with the subsets $W^i$. Some of their properties are listed below.

\begin{prop}\label{properties B^i}
	Let $\cM$ be a Cartier module on $A$, with associated injective $V$--module $\cN_{\injj}$. Then the following holds:
	
	\begin{enumerate}[itemsep=1ex]
		\item\label{inclusions} $V^g_{\injj} \subseteq \dots \subseteq V^0_{\injj}$;
		\item\label{codim} $\codim V^i_{\injj} \geq i$.
		\item\label{pB^0_is_B^0} $p^s(V^0_{\injj}) \subseteq V^0_{\injj}$, and hence $W^0 = p^{es}(V^0_{\injj})$ for $e \gg 0$;
		\item\label{non_vanishing_support} $V^0_{\injj} \inc V^0$.
		\item\label{codim_i_components} If $S^i$ denotes the union of the components of $V^i_{\injj}$ of codimension $i$, then $p^s(S^i) \subseteq S^i$. Hence, $p^{es}(S^i) \subseteq W^i$ for all $e \gg 0$, and it contains all components of $W^i$ of codimension $i$.
	\end{enumerate}
\end{prop}
\begin{proof}
	The statements \autoref{inclusions}, \autoref{codim} and \autoref{non_vanishing_support} were already shown during the proof of \autoref{generic vanishing}. The first part of \autoref{pB^0_is_B^0} is immediate, since $\cN_{\injj} \inj V^{s, *}\cN_{\injj}$ and $V^0_{\injj} = -\Supp(\cN_{\injj})$ by Nakayama's lemma. To obtain that $W^0 = p^{es}(V^0_{\injj})$, use \autoref{p-dynamic closure of a subset}.\autoref{explicit_2}.
	We now prove \autoref{codim_i_components}. Let $\alpha \in V^i_{\injj}$ be a scheme--theoretic point of codimension $i$. Then \[ \Tor_i(\cN_{\injj}, k(-\alpha)) \inj \Tor_i(V^{s, *}\cN_{\injj}, k(-\alpha))\] (there is no $\Tor_{i+ 1}$ on an $i$--dimensional regular local ring), so $\Tor_i(\cN_{\injj}, k(p^s(-\alpha)) \neq 0$ by \autoref{Tor(V*) = 0 at x iff Tor = 0 at p(x)}. In particular, $p^s(\alpha) \in V^i_{\injj}$, so we have $p^s(S^i) \subseteq S^i$. The rest follows as in the proof of \autoref{pB^0_is_B^0}.
\end{proof}

\subsection{Examples}\label{sec:examples}
 
Let us give a few examples, giving an idea of what these subsets are, and illustrating pathologies which can occur.

\begin{example}\label{first_example}
	Let $r$ denote the $p$--rank of $A$. If $r < g$, then by definition, the Frobenius action on $H^r(A, \cO_A)$ is not nilpotent, but the action on $H^{r + 1}(A, \cO_A)$ is nilpotent. By Serre duality, we then deduce that $0 \in W^{g - r}_F(\omega_A)$ but $0 \notin W^{g - r + 1}_F(\omega_A)$. Thus, $W^{g - r}_F(\omega_A)  \not\subseteq W^{g - r + 1}_F(\omega_A)$. In particular, although we have \[ W^g \inc W^{g - 1} \inc \dots \inc W^0 \] and \[ V^g_{\injj} \inc V^{g - 1}_{\injj} \dots \inc V^0_{\injj} \] in general, this does not necessarily hold for the subsets $W^i_F$ (when $r < g$). We will nevertheless see in \autoref{inclusions_SCSL} that these inclusions ``essentially hold'' when $A$ is ordinary (i.e. when $r = g$).
	
	In this example, we have the following:
	\begin{align*}
		& \cH^0(\FM_A(\omega_A))_{\injj} = \FM_A(\omega_A) = k(0); \\
		& W^i = V^i_{\injj} = \{0_{\bighat{A}}\} \mbox{ for all } 0 \leq i \leq g; \\
		& \ttilde{Z^i} = \begin{cases}
			\emptyset, \: \mbox{ if } i = g \mbox{ or } A \mbox{ is ordinary;}\\
			\bighat{A}[p^{\infty}], \: \mbox{ else;}
		\end{cases}\\
	    & W^i_F = \begin{cases}
			\emptyset, \: i \leq g - r; \\
			\bighat{A}[p^{\infty}], \: i > g - r,
		\end{cases}
	\end{align*} 
	where $\bighat{A}[p^{\infty}]$ denotes the subset of $p$--power torsion points of $\bighat{A}$. In particular, whenever $0 < r < g$, we see that both inclusions in \autoref{generic vanishing}.\autoref{van_and_non_van} can be strict.
\end{example}

\begin{example}\label{example_euler_char}
		Let $E$ be an ordinary elliptic curve, and assume for simplicity that $p \neq 2$. Although $E \cong \bighat{E}$, we will still write $E$ and $\bighat{E}$ to distinguish Cartier modules on $E$ and $V$--modules on $\bighat{E}$.
		
		Since we will work with divisors and the group law at the same time, we will use the following notation:
		
		\begin{itemize}
				\item the notation for divisors is the same as usual, i.e. we use the additive notation;
				\item the neutral element of $E$ is denoted $e$;
				\item the group law on $E$ is written multiplicatively. Recall that by definition of the group law, we have the following linear equivalence of \emph{divisors} for all $\alpha, \beta \in \bighat{E}$:
				\[ \alpha + \beta \sim \alpha\beta + e. \]
		\end{itemize}
	
		Consider the line bundle $L \coloneqq \cO_{\bighat{E}}(e)$. Then \[ V^{s, *}L \cong \cO_{\bighat{E}}\left(\sum_{p^s(\alpha) = e} \alpha\right) \cong \cO_{\bighat{E}}\left(\left(\prod_{p^s(\alpha) = e}\alpha\right) + (p^s - 1)e\right) \expl{\cong}{$p \neq 2$} \cO_{\bighat{E}}(p^se). \]
		
		We indeed used that $p \neq 2$ to infer that for any non--trivial $\alpha$ of $p^s$--torsion, $\alpha^{-1}$ is also of $p^s$--torsion and $\alpha \neq \alpha^{-1}$ since $\alpha$ is not $2$--torsion. Therefore $\alpha$ is cancelled by $\alpha^{-1}$ and this product is indeed $e$.
		
		Now, let $\theta \colon L \to V^{s, *}L$ be the (injective) morphism corresponding to the section $1 \in H^0(\bighat{E}, \cO_E((p^s - 1)e))$. Then $\theta \otimes k(0) = 0$, so \[ \colim V^{es, *}L \otimes k(0) = 0. \] On the other hand, for any $\alpha \in \bighat{E}$ not of $p$--power torsion, each morphism $V^{es, *}\theta$ is an isomorphism at $\alpha$.
		
		Now, let $\cM = \cH^0\FM_{\bighat{A}}(L)$ be the Cartier module on $E$ corresponding to $L$ (see \autoref{equivalence_V_crystals_and_Cartier_crystals}). Then our computation above shows that \[ W^0_F(\cM) = \bighat{E} \: \setminus \: \bigcup_{e > 0} (p^{es})^{-1}\{0_{\bighat{E}}\} = \bighat{E} \setminus \bighat{E}[p^{\infty}], \] that $W^0 = V^0_{\injj} = \bighat{E}$ ($L$ is an injective $V$--module), and $Z^0 = \{0_{\bighat{E}}\} \neq \emptyset$ (so $\ttilde{Z^0} = \bighat{E}[p^{\infty}]$). In particular, this shows that the answer to \cite[Question 1.14.(b)]{Hacon_Patakfalvi_Zhang_Bir_char_of_AVs} is negative.
\end{example}
\begin{rem}[Euler characteristics]
	It is now well--known that if $\cM$ is a GV--sheaf on an abelian variety $A$, then $\chi(A, \cM) \geq 0$: the reason is that if $\alpha \in \bighat{A}$ is general, then \[ \chi(A, \cM) = \chi(A, \cM \otimes \cP_{\alpha}) = h^0(A, \cM \otimes \cP_{\alpha}) \geq 0. \]
	
	With the idea that ``Cartier modules are GV--sheaves up to nilpotence'', one can wonder if the following holds: let $\cM$ be a Cartier module on an abelian variety $A$, then $\chi_{ss}(A, \cM) \geq 0$ (see \autoref{rem:def_Cartier_mod}.\autoref{itm:notation H^0_ss} for this notation).
	
	Unfortunately, this fails in general: if we take $r = 0$ in \autoref{first_example}, then $\chi_{ss}(A, \omega_A) = (-1)^g$, which is negative whenever $g$ is odd. The issue why the characteristic zero argument does not go through is that semistable Euler characteristic is \emph{not} invariant under twists. In \emph{loc. cit.}, we have that $\chi_{ss}(A, \omega_A \otimes \cP_{\alpha}) = 0$ if $\alpha$ is not of $p$--power torsion, so this semistable Euler characteristic can go both up or down at ``special points''. 
	
	This example only happens when $r = 0$, so one might hope that this pathological behaviour (the failure of invariance under twists) does not occur when $A$ is for example ordinary. Unfortunately, \autoref{example_euler_char} shows that even this is not true: with $\cM$ as in \emph{loc. cit.}, then our computations show that \[ \chi_{ss}(E, \cM \otimes \cP_\alpha) = \begin{cases}
		0 \mbox{ if $\alpha$ is $p$--power torsion;} \\
		1 \mbox{ if not.}
	\end{cases} \]

	In any case, it can be shown that $\chi_{ss}(A, \cM) \geq 0$ for any Cartier module on an ordinary abelian variety. This is explored in \cite{Baudin_On_the_Euler_characteristic_of_weakly_ordinary_varieties_of_maximal_albanese_dimension}.
	
\end{rem}

\section{Effectivity of $\omega_X$ for varieties of maximal Albanese dimension}

Our goal here is to show \autoref{intro_effectivity_canonical}. Although the first part of this statement (the one which does not mention ordinarity) already follows from \autoref{generic vanishing} as we will see, the second part will require some more work. \\

Let us first show what we claimed in \autoref{first_example}.

\begin{slem}\label{inclusions_SCSL}
	Assume that $A$ is ordinary, and let $\alpha \in \bighat{A}$ be a torsion point. Then for any Cartier module $\cM$ on $A$ and $i \geq 0$, then \[ \alpha \in W^{i + 1}_F(\cM) \implies \alpha \in W^i_F(\cM). \]
\end{slem}

\begin{proof}
	Let $\cN \coloneqq \cH^0\FM_A(\cM)$. Since $W^j_F(\cM) = W^j_V(\cN)$ for all $j \geq 0$ by \autoref{equality_of_SCSL}, we have to show that if $-\alpha \in W^{i + 1}_V(\cN)$ is a torsion point, then also $-\alpha \in W^i_V(\cN)$.

	Let $\beta \coloneqq -\alpha$ be a torsion point, and let $m \geq 0$ be the order of the point $\beta$ in the group law of $\bighat{A}$. Since $m$ divides $p^{as}(p^{bs} - 1)$ for some $a, b \geq 1$, and since $(p^s)^{-1}W^i_F(\cM) = W^i_F(\cM)$, we may replace $\beta$ by $p^{as}(\beta)$ and assume that $p^{bs}(\beta) = \beta$. Given that \[ \colim_e V^{es, *}\cN \cong \colim_e V^{ebs, *}\cN, \] we may replace $s$ by $bs$ and assume that $b = 1$. In other words, we have $p^s(\beta) = \beta$. In particular, $V^s$ induces a morphism $\cO_{\bighat{A}, \beta} \to \cO_{\bighat{A}, \beta}$. Write $N \coloneqq \cN_\beta$ (i.e. the stalk at $\beta$), and let $\fm_\beta$ denote the maximal ideal of $\cO_{\bighat{A}, \beta}$. Now, let \[ 0 \to K \to P \to N \to 0 \] be a short exact sequence of $\cO_{\bighat{A}, \beta}$--modules, where $P$ is free of finite rank and $P \otimes k(\beta) \to N \otimes k(\beta)$ is an isomorphism. By freeness of $P$, there exists a $V$--module stucture $\theta_P$ on $P$ such that $P \surj N$ becomes a morphism of $V$--modules. In particular, $K$ acquires the structure of a $V$--module too, and we have a commutative diagram with exact rows as follows: \[ \begin{tikzcd}
		0 \arrow[r] & K \arrow[r] \arrow[d, "\theta_K"] & P \arrow[r] \arrow[d, "\theta_P"] & N \arrow[r] \arrow[d, "\theta_N"] & 0  \\
		0 \arrow[r] & {V^{s, *}K} \arrow[r] & {V^{s, *}P} \arrow[r] & {V^{s, *}N} \arrow[r]      & 0
	\end{tikzcd} \]
	For all $j \geq 1$, we then have isomorphisms \[ \colim \Tor_j(V^{es, *}K, k(\beta)) \cong \colim \Tor_{j + 1}(V^{es, *}N, k(\beta)). \] Since $P \to N$ is an isomorphism at $k(\beta)$, this is also the case for each map $V^{es, *}P \to V^{es, *}N$. In particular, we also have that \[ \colim V^{es, *}K \otimes k(\beta) \cong \colim \Tor_1(V^{es, *}N, k(\beta)). \]
	This shows that it is enough to prove that if $\beta \in W^1_F(\cN)$, then $\beta \in W^0_F(\cN)$ (for the other $i$'s, replace $N$ by $K$ and so on).
	We are going to show the contrapositive, so assume that $\beta \notin W^0_F$, i.e. \[ \varinjlim V^{es, *}N \otimes k(\beta) = 0. \]
	This implies that for some $e > 0$, $\theta_N^e(N) \inc \fm_\beta(V^{es, *}N)$. Since both $P \to N$ and $V^{es, *}P \to V^{es, *}N$ are isomorphisms after tensoring by $k(\beta)$, we also have that $\theta_P^e(P) \inc \fm_\beta(V^{es, *}P)$. 
	
	By Krull's theorem, there exists $c \geq 1$ such that $\fm_\beta^cP \cap K \inc \fm_\beta K$. Thus, 
	\begin{align*} 
		\theta_K^{ce}(K) & \inc V^{ces, *}K \cap \theta_P^{ce}(P) \\
		& \inc V^{ces, *}K \cap \fm_\beta^{c}(V^{ces, *}P) \\
		& \expl{=}{$V$ is étale by \autoref{basic facts about AV}.\autoref{itm:ordinary iff Verschiebung etale}, so $V(\fm_{\beta}^c)\cO_{\bighat{A}, \beta} = \fm_{\beta}^c$} V^{ces, *}K \cap V^{ces, *}(\fm_\beta^{c}P) \\
		& \expl{=}{\cite[Theorem 7.4.(i)]{Matsumura_Commutative_Ring_Theory}} V^{ces, *}\left(K \cap \fm_\beta^{c}P\right) \\
		& \inc V^{ces, *}(\fm_\beta K) \\
		& \expl{=}{$V$ is étale} \fm_\beta V^{ces, *}K.
	\end{align*}
	In other words, $\theta_K^{ce} \otimes k(\beta) = 0$. But then, $V^{r, *}\theta_K^{ce} \otimes k(\beta) = 0$ for all $r \geq 0$, which gives that \[ 0 = \varinjlim V^{es, *}K \otimes k(\beta) \cong \varinjlim \Tor^1(V^{es, *}N, k(\beta)).  \] We conclude that $\beta \notin W^1_F(\cN)$, completing the proof.
\end{proof}

Now let us move to the geometric consequence. We start with a useful well--known fact.

\begin{slem}\label{regular_implis_simple}
	Let $Y$ be a regular variety. Then $\omega_Y$ is simple as a Cartier module (i.e. any sub--Cartier module is either $\omega_Y$ or zero).
\end{slem}
\begin{proof}
	Since $Y$ is Gorenstein, we have to show that the test ideal $\tau(\cO_Y)$ of $Y$ is $\cO_Y$ (see \cite[Section 3]{Schwede_Tucker_A_survey_of_test_ideals}). In other words, we need to show that $Y$ is strongly $F$--regular. This holds by \cite[Theorem 3.1]{Hochster_Huneke_Tight_closure_and_strong_F_reg}.
\end{proof}

We can finally show our last main theorem.

\begin{sthm}\label{effectivity_canonical}
	Let $X$ be a normal proper variety of maximal Albanese dimension. Then $H^0(X, \omega_X) \neq 0$. If furthermore $X$ admits a generically finite morphism to an ordinary abelian variety, then $S^0(X, \omega_X) \neq 0$. 
\end{sthm}
\begin{srem}\label{last_rem} 
	If $a \colon X \to A$ is a generically finite morphism to an abelian variety which is separable onto its image, then the fact that $H^0(X, \omega_X) \neq 0$ is much easier to prove (and also well--known). Indeed, let $Z \coloneqq a(X) \inc A$. Since $\Omega_A^1$ is globally generated and surjects onto $\Omega^1_Z$, we obtain that $\Omega^1_Z$ is globally generated. By separability of $a$, the pullback map $a^*\Omega_Z^1 \to \Omega_X^1$ is generically surjective, so $\Omega_X^1$ is generically generated by global sections. In particular, we obtain that $H^0(X, \omega_X) \neq 0$. Note that one can also adjust this proof to obtain that $S^0(X, \omega_X) \neq 0$ if $A$ is ordinary, since then $\Omega_A^1$ admits a basis of closed $1$--forms fixed under the Cartier operator (which is compatible with wedges and pullbacks).
	
	The main issue is when $a$ is not separable onto its image, as then this approach cannot work and we must proceed differently.
\end{srem}
\begin{proof}
	We will show that the conclusion of \autoref{effectivity_canonical} holds for \emph{any} Cartier submodule of $\omega_X$. Hence, let $\ttilde{\omega_X}$ be a Cartier submodule of $\omega_X$. Set also $d \coloneqq \dim(X)$. Let $a \colon X \to A$ be a generically finite morphism to an abelian variety $A$. 
	
	Assume first that $a \colon X \to A$ is finite. Note that by \cite[Proposition 2.4]{Patakfalvi_Zdanowicz_Ordinary_varieties_with_trivial_canonical_bundle_are_not_uniruled}, Serre duality induces an isomorphism  on $H^d(X, \omega_X) \cong H^0(X, \cO_X)^{\vee}$. Since the Frobenius action on $H^0(X, \cO_X)$ is non--nilpotent, we deduce that $H^d_{ss}(X, \omega_X) \neq 0$. Consider the short exact sequence of Cartier modules \[ 0 \to \ttilde{\omega_X} \to \omega_X \to \cC \to 0. \]
	Since $X$ is regular in codimension one, we deduce from \autoref{regular_implis_simple} that $\codim (\Supp \cC) \geq 2$. Hence, the natural morphism $H^d(X, \ttilde{\omega_X}) \to H^d(X, \omega_X)$ is an isomorphism, whence $H^d_{ss}(X, \ttilde{\omega_X}) \neq 0$. Given that $a$ is finite, we obtain that $H^d_{ss}(A, a_*\ttilde{\omega_X}) \cong H^d_{ss}(X, \ttilde{\omega_X}) \neq 0$. In other words, \[ 0 \in W^d_F(a_*\ttilde{\omega_X}) \expl{\subseteq}{\autoref{generic vanishing}.\autoref{van_and_non_van}} \bigcup_{e > 0} (p^e)^{-1}W^d, \] so $0 \in W^d$.
	Now, we have \[ W^d \expl{\inc}{\autoref{itm:gv_inclusions}} W^0 \expl{\inc}{\autoref{itm:gv_link_with_section}} \set{\alpha \in \bighat{A}}{H^0(A, a_*\ttilde{\omega_X} \otimes \cP_{\alpha}) \neq 0}, \] where both items refer to \autoref{generic vanishing}. In particular, we deduce that \[ H^0(X, \ttilde{\omega_X}) \neq 0. \] If we further assume that $A$ is ordinary, then since $0 \in W^d_F(a_*\ttilde{\omega_X})$, we directly obtain that $0 \in W^0_F(a_*\ttilde{\omega_X})$ by \autoref{inclusions_SCSL}. In other words, $H^0_{ss}(X, \ttilde{\omega_X}) \neq 0$. \\
	
	Now, let us prove the statement in the general case, i.e. we do not assume that $a$ is finite anymore. Let \[ \begin{tikzcd}
		X \arrow[rr, "f"] \arrow[rrrr, "a"', bend right] &  & Y \arrow[rr] &  & A.
	\end{tikzcd} \] denote the Stein factorization of $a$. Then $f_*\ttilde{\omega_X} \inc f_*\omega_X \inc \omega_Y$, so the statement for $\ttilde{\omega_X}$ on $X$ follows from the statement for the Cartier sub--module $f_*\ttilde{\omega_X} \inc \omega_Y$ on $Y$.
\end{proof}

\bibliographystyle{alpha}
\bibliography{Bibliography}

\Addresses

\end{document}